\RequirePackage{fix-cm}
\documentclass[smallextended,envcountsect,nospthms]{svjour3}

\usepackage[T1]{fontenc}
\usepackage[utf8]{inputenc}
\usepackage{mathptmx}
\usepackage{amsmath,amssymb,amsthm,mathtools}
\usepackage{graphicx}
\usepackage{tikz}
\usetikzlibrary{arrows.meta,calc,positioning}
\usepackage{bm}
\usepackage{xcolor}
\usepackage{url}
\usepackage{hyperref}

\smartqed
\journalname{Journal of Evolution Equations}

\hypersetup{
  colorlinks=true,
  linkcolor=blue!55!black,
  citecolor=blue!55!black,
  urlcolor=blue!55!black,
  pdftitle={Scattering and low-speed critical elements for the three-dimensional focusing energy-critical nonlinear Schrodinger equation},
  pdfauthor={Pang-Hung Chung and Dan Han}
}
\allowdisplaybreaks
\numberwithin{equation}{section}

\newtheoremstyle{spbreak}%
  {3pt}{3pt}%
  {\itshape}{}%
  {\bfseries}{.}%
  {\newline}%
  {}%
\newtheoremstyle{spdefbreak}%
  {3pt}{3pt}%
  {}{}%
  {\bfseries}{.}%
  {\newline}%
  {}%
\theoremstyle{spbreak}
\newtheorem{theorem}{Theorem}[section]
\newtheorem{proposition}[theorem]{Proposition}
\newtheorem{lemma}[theorem]{Lemma}

\theoremstyle{spdefbreak}

\theoremstyle{spdefbreak}

\makeatletter
\@ifundefined{acknowledgements}{%
  \newenvironment{acknowledgements}{\section*{Acknowledgements}}{}%
}{}
\@ifundefined{at}{\newcommand{\at}{\par\smallskip}}{}
\makeatother

\newcommand{\R}{\mathbb R}
\newcommand{\C}{\mathbb C}
\newcommand{\eps}{\varepsilon}

\newcommand{\Vc}{\mathcal V}
\newcommand{\Err}{\operatorname{Err}}

\newcommand{\Nd}{\dot N^1}
\newcommand{\Sd}{\dot S^1}

\begin{document}

\title{Scattering and Low-Speed Critical Elements for the Three-Dimensional Focusing Energy-Critical Nonlinear Schr\"odinger Equation}
\titlerunning{Scattering and low-speed critical elements}

\author{Pang-Hung Chung \and Dan Han\thanks{Corresponding author.}}
\authorrunning{P.-H. Chung and D. Han}

\institute{%
P.-H. Chung \at
Department of Applied Mathematics, Guangdong University of Education,
Guangzhou 510640, P. R. China\\
\email{penghongzhong@yahoo.com}
\and
D. Han \at
Department of Mathematics, University of Louisville,
Louisville, KY 40245, USA\\
\email{dan.han@louisville.edu}
}

\date{}

\maketitle

\begin{abstract}
The below-threshold scattering problem is considered for the three-dimensional focusing energy-critical nonlinear Schr\"odinger equation. A main non-radial obstruction is the possible drift of the concentration center of a soliton-like critical element, which prevents a fixed-center localized virial estimate from closing directly. It is shown that such a compact critical solution must vanish if it has an $L_t^\infty L_x^q$ bound and its center drifts sufficiently slowly. The result covers the pure-energy, endpoint $L^4$, and finite-mass regimes and gives a corresponding conditional scattering criterion.
\keywords{critical NLS \and scattering \and critical element \and local virial}
\subclass{35Q55 \and 35B40 \and 35B44 \and 35B35}
\end{abstract}

%\section*{Contents}
%\noindent
%\begin{tabular}{@{}ll@{}}
%1. & Introduction\\
%2. & Notation, thresholds, and coercivity\\
%3. & Critical compactness, tails, and drift\\
%4. & Local mass growth and the standard $L^p$ breach\\
%5. & Fixed-center localized virial\\
%6. & Proofs of the main results\\
%   & Acknowledgements\\
%   & References
%\end{tabular}
\medskip

\section{Introduction}

The three-dimensional energy-critical nonlinear Schr\"odinger equation is a basic model in critical dispersive theory. In this paper we consider the focusing equation
\begin{equation}\label{eq:NLS}
  i\partial_tu+\Delta u+|u|^4u=0,
  \qquad (t,x)\in I\times\R^3,
\end{equation}
where $I\subset\R$ is a time interval and $u:I\times\R^3\to\C$. The equation is invariant under the scaling
\[
  u(t,x)\mapsto u_\lambda(t,x):=\lambda^{1/2}u(\lambda^2t,\lambda x),
  \qquad \lambda>0,
\]
and
\[
  \|\nabla u_\lambda(0)\|_{L_x^2}=\|\nabla u(0)\|_{L_x^2}.
\]
Thus the natural critical space is
\[
  \dot H^1(\R^3):=\{f\in L^6(\R^3):\nabla f\in L^2(\R^3)\},
  \qquad \|f\|_{\dot H^1}:=\|\nabla f\|_{L^2}.
\]
Throughout the paper we write
\[
  C_t^0\dot H_x^1(J\times\R^3)=C(J;\dot H^1(\R^3)),
  \qquad U_0:=U(0),
\]
set $B(z,R):=\{x\in\R^3:|x-z|<R\}$, and use $X\lesssim_\alpha Y$ to mean $X\le C(\alpha)Y$ for a constant $C(\alpha)>0$ depending only on $\alpha$. We write $X\gtrsim_\alpha Y$ when $Y\lesssim_\alpha X$, and $X\simeq_\alpha Y$ when both $X\lesssim_\alpha Y$ and $Y\lesssim_\alpha X$ hold. We also write $F(T)=O(G(T))$ if there exists a constant $C>0$ such that $|F(T)|\le C G(T)$ for all sufficiently large $T$, and $F(T)=o(G(T))$ if $F(T)/G(T)\to0$ as $T\to\infty$, whenever $G(T)>0$. Finally, $\mathcal K\Subset X$ means that $\mathcal K$ is relatively compact in the normed space $X$. For a time interval $J$, set $L_{t,x}^p(J\times\R^3):=L^p(J\times\R^3)$ and define the critical scattering norm
\[
  S_J(u):=\|u\|_{L_{t,x}^{10}(J\times\R^3)}.
\]
For the equation \eqref{eq:NLS}, local well-posedness, stability, and small-data scattering in the energy space $\dot H^1$ follow from Strichartz estimates and a contraction argument. The foundational literature includes the works of Ginibre--Velo, Kato, Cazenave--Weissler, Keel--Tao, and Cazenave's monograph \cite{GinibreVelo1979I,GinibreVelo1979II,GinibreVelo1985,Kato1987,CazenaveWeissler1990,KeelTao1998,Cazenave2003}. For strong solutions, the energy
\[
  E(f):=\frac12\int_{\R^3}|\nabla f|^2\,dx-
       \frac16\int_{\R^3}|f|^6\,dx
\]
is conserved. We also use the Pohozaev functional
\[
  K(f):=\int_{\R^3}|\nabla f|^2\,dx-
       \int_{\R^3}|f|^6\,dx.
\]
The sharp threshold for the focusing problem is determined by the Aubin--Talenti optimizers for the Sobolev inequality \cite{Aubin1976,Talenti1976}. We fix
\[
  W(x)=\left(1+\frac{|x|^2}{3}\right)^{-1/2}.
\]
Then $-\Delta W=W^5$, $\|\nabla W\|_{L^2}^2=\|W\|_{L^6}^6$, $K(W)=0$, and
\[
  E(W)=\frac13\|\nabla W\|_{L^2}^2.
\]
The sharp Sobolev inequality and the Weinstein variational mechanism \cite{Weinstein1983} yield the below-threshold potential well
\[
  E(u_0)<E(W),\qquad \|\nabla u_0\|_{L^2}<\|\nabla W\|_{L^2}.
\]

In this region the solution remains on the stable branch below the ground state. If the solution is nonzero, then $K(u(t))$ has a positive lower bound throughout its interval of existence. This positive virial gap is the driving force behind the rigidity argument below.

The general non-radial below-threshold unconditional scattering problem considered here may be summarized as follows: under the above below-threshold assumptions, must every global strong solution satisfy $S_\R(u)<\infty$ and hence scatter? For the defocusing three-dimensional energy-critical equation, global well-posedness and scattering were established by Colliander--Keel--Staffilani--Takaoka--Tao \cite{CKSTT2008}; related radial and high-dimensional defocusing results are due to Bourgain, Tao, Tao--Visan, and Visan \cite{Bourgain1999,Tao2005,TaoVisan2005,Visan2007}. In the focusing case, the concentration-compactness/rigidity method of Kenig--Merle proves the radial below-threshold scattering/blow-up dichotomy in dimensions three, four, and five \cite{KenigMerle2006}. Their contradiction mechanism relies on profile decomposition and a minimal nonscattering critical element, and these tools are closely related to the work of Bahouri--G\'erard and Keraani \cite{BahouriGerard1999,Keraani2001}. The refined dynamics near the threshold was classified by Duyckaerts--Merle \cite{DuyckaertsMerle2009}. The virial blow-up mechanism in the finite-variance or radial setting goes back to Glassey and Ogawa--Tsutsumi \cite{Glassey1977,OgawaTsutsumi1991}, and sharp scattering thresholds in the mass-supercritical, energy-subcritical model give important analogies for the energy-critical problem \cite{HolmerRoudenko2008,DuyckaertsHolmerRoudenko2008}.

The radial hypothesis plays a decisive role in the Kenig--Merle rigidity step: the spatial center of a radial critical element is fixed at the origin, and a localized virial truncation only needs to cover a fixed ball. In the non-radial case, concentration compactness usually gives precompactness only modulo scaling and translation. In other words, if $U$ is a minimal nonscattering object, then there may only exist a scale $N(t)$ and a center $x(t)$ such that
\[
  \left\{N(t)^{-1/2}U\left(t,x(t)+\frac{y}{N(t)}\right):t\ge0\right\}
  \Subset\dot H^1(\R^3).
\]
Denote by $R_T>0$ a fixed-center localized virial radius on the time window $[0,T]$. This radius must grow with $\sup_{0\le t\le T}|x(t)-x(0)|$. At the same time, the upper bound for the virial action also grows with $R_T$. Under pure $\dot H^1$ control, the local $L^2$ mass can only be estimated by the rough Sobolev bound; accordingly, the action upper bound is only of order $O(R_T^2)$. This is exactly the source of the subdiffusive exponent $T^{1/2}$ for the drift.

We now fix two quantities that measure drift and local mass growth. The channel considered in this paper is the soliton-like critical channel, namely there exist constants $N_-,N_+>0$ such that the scale function satisfies
\[
  0<N_-\le N(t)\le N_+<\infty.
\]
Fixing the observation center $z=x(0)$, define the center-drift radius by
\[
  D_x(T):=\sup_{0\le t\le T}|x(t)-x(0)|.
\]
For $2\le q\le6$, define the local mass growth exponent
\[
  \theta(q):=3\left(\frac12-\frac1q\right).
\]
Thus $\theta(6)=1$ corresponds to pure energy control, $\theta(4)=3/4$ corresponds to the endpoint $L^4$ control, and $\theta(2)=0$ corresponds to finite mass.

More concretely, the fixed-center virial argument closes around three quantities. Let $A>0$ be a tail radius and set $R_T:=D_x(T)+A$. Denote by $\mathcal V_{R_T}(t)$ the fixed-center localized virial action and by $\operatorname{Err}_{R_T}(t;z)$ its truncation remainder; their precise expressions are given in Section~\ref{sec:virial}. The derivative has the form
\[
  \frac{d}{dt}\mathcal V_{R_T}(t)
  =8K(u(t))+\operatorname{Err}_{R_T}(t;z),
\]
where the truncation error is controlled by
\begin{align*}
  |\operatorname{Err}_{R_T}(t;z)|
  &\lesssim
  \int_{|x-z|\ge R_T}\bigl(|\nabla u(t,x)|^2+|u(t,x)|^6\bigr)\,dx\\
  &\quad +R_T^{-2}\int_{R_T\le |x-z|\le 2R_T}|u(t,x)|^2\,dx.
\end{align*}
Critical compactness and the triangle inequality ensure that, once $A$ is fixed sufficiently large, this error can be made small uniformly on the whole time window $0\le t\le T$. This step uses only the fact that $R_T$ covers the region swept out by the center drift. On the other hand, the action itself satisfies
\[
  |\mathcal V_{R_T}(t)|
  \lesssim R_T\|u(t)\|_{L^2(B(z,2R_T))}.
\]
Hence the non-radial difficulty is compressed precisely into the question of how the local $L^2$ mass grows with $R_T$. The parameter $q$ records this growth rate: $q=6$ is the pure-energy layer, $q=4$ is the endpoint layer, and $q=2$ is the finite-mass layer. The contradiction is then obtained not by choosing one fixed radius once and for all, but along a time sequence $T_n\to\infty$ satisfying
\[
  \frac{(D_x(T_n)+1)^{1+\theta(q)}}{T_n}\to0.
\]
This sequential closing is a technical point that allows us to treat low-speed drift without imposing any monotonicity assumption on the drift.

There has been major progress on non-radial focusing problems in higher dimensions. Killip--Visan proved non-radial below-threshold scattering for the focusing energy-critical equation in dimensions five and higher \cite{KillipVisan2010}, and Dodson treated the four-dimensional focusing energy-critical cubic equation \cite{Dodson2019}. These works rely on dimension-dependent long-time Strichartz estimates, double Duhamel arguments, additional integrability, or negative-regularity mechanisms. The three-dimensional quintic energy-critical equation lies at a lower-dimensional endpoint, and the low-frequency behavior of the ground state $W$ obstructs a direct derivation of finite-mass $L^2$ control from pure $\dot H^1$ critical compactness. Thus the general three-dimensional non-radial below-threshold unconditional scattering problem remains a key difficulty. This paper does not claim to solve that full problem; instead, it excludes a family of low-speed soliton-like critical channels for which a fixed-center virial argument can be closed.

In the above soliton-like framework, if $2\le q\le6$ and $u\in L_t^\infty L_x^q$, then the local mass growth satisfies
\[
  \|u(t)\|_{L^2(B(x(t),R))}
  \lesssim R^{\theta(q)}\|u\|_{L_t^\infty L_x^q}.
\]
Consequently, the fixed-center virial closure takes the form
\[
  T\lesssim \bigl(D_x(T)+1\bigr)^{1+\theta(q)}.
\]
The main novelty of this paper is to close the below-threshold coercivity, smallness of the critical compactness tails, and local mass growth in a unified way, obtaining a rigidity criterion with the parameter $q$. This includes, in a single framework, the pure-energy regime, the Killip--Visan type $L^p$ breach \cite{KillipVisan2013}, the endpoint $L^4$ regime, and the finite-mass regime.

We emphasize that the low-speed condition here constrains only the growth order of the drift radius. It does not assume any regularity of $x(t)$, existence of a velocity, or monotonicity. On each time window $[0,T]$ the proof fixes a tail radius $A_0>0$, chooses $R_T:=D_x(T)+A_0$, and uses the same truncation function on that window. Thus the moving center may oscillate back and forth. It is enough that there exists a sequence $T_n\to\infty$ such that
\[
  (D_x(T_n)+1)^{1+\theta(q)}=o(T_n)
\]
in order to contradict the linear virial growth. By contrast, if the center has almost linear drift, for instance $D_x(T)\simeq T$, then the fixed-center virial action upper bound and the linear lower bound live on the same scale, and the present method no longer gives a contradiction by itself. This explains why the full unconditional scattering problem must combine additional dynamical information in order to rule out fast channels or to show that they cannot arise as minimal counterexamples.

Technically, the rigidity of a non-radial soliton-like critical element is reduced to the proportion
\[
  \frac{\text{upper bound for the virial action}}{\text{linear virial lower bound}}
  \lesssim
  \frac{(D_x(T)+A_0)^{1+\theta(q)}}{T}.
\]
The below-threshold hypothesis produces only the linear lower bound in the denominator; critical compactness ensures that the truncation error does not destroy this lower bound; and the $L_t^\infty L_x^q$ input controls only the local mass growth in the numerator. This separation allows the proof below to substitute different values of $q$ within the same framework, and it makes clear that the endpoint obstruction in three dimensions is concentrated in the finite-mass or negative-regularity issue.

With these preparations in place, the rigidity statement can be formulated as follows.

\begin{theorem}[Low-speed critical elements]\label{thm:rigidity-general}
Let $U\in C_t^0\dot H_x^1([0,\infty)\times\R^3)$ be a global strong solution to \eqref{eq:NLS}. Assume
\[
  E(U_0)<E(W),
  \qquad \|\nabla U_0\|_{L^2}<\|\nabla W\|_{L^2}.
\]
Assume that there exist $N(t)$ and $x(t)$ such that
\[
  \left\{N(t)^{-1/2}U\left(t,x(t)+\frac{y}{N(t)}\right):t\ge0\right\}
  \Subset\dot H^1(\R^3),
\]
and
\[
  0<N_-\le N(t)\le N_+<\infty,
  \qquad t\ge0.
\]
Assume moreover that, for some $q\in[2,6]$,
\[
  M_q(U):=\sup_{t\ge0}\|U(t)\|_{L^q(\R^3)}<\infty,
\]
where, when $q=6$, this condition follows automatically from critical compactness, boundedness of the scale, and Sobolev embedding. If the center drift satisfies the sequential virial admissibility condition
\begin{equation}\label{eq:rigidity-drift-general}
  \liminf_{T\to\infty}
  \frac{(D_x(T)+1)^{1+\theta(q)}}{T}=0,
\end{equation}
then $U\equiv0$.
\end{theorem}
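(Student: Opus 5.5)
We argue by contradiction: assume $U\not\equiv0$ and derive incompatible upper and lower bounds for the fixed-center localized virial action along a sequence $T_n\to\infty$. Fix $z=x(0)$ and a smooth radial cutoff $\phi$ with $\phi(r)=r^2$ for $r\le1$ and $\phi$ constant for $r\ge2$. For $R>0$ set
\[
  \Vc_R(t)=2\,\mathrm{Im}\int_{\R^3} R\,\nabla\phi\!\left(\tfrac{x-z}{R}\right)\cdot\nabla U\,\overline{U}\,dx .
\]
The standard virial computation then gives $\frac{d}{dt}\Vc_R=8K(U(t))+\Err_R(t;z)$, with $\Err_R$ bounded by the tail integrals stated in the introduction.

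\textbf{Step 1 (coercivity).} Energy trapping (sharp Sobolev inequality plus the Weinstein/Kenig--Merle argument) keeps $\|\nabla U(t)\|_{L^2}\le(1-\delta)\|\nabla W\|_{L^2}$ for all $t$. This yields $K(U(t))\ge c_\delta\|\nabla U(t)\|_{L^2}^2$. Since $U\not\equiv0$ and the energy is conserved and positive, we get $\|\nabla U(t)\|_{L^2}^2\ge 2E(U_0)>0$. Hence $8K(U(t))\ge\eta>0$ uniformly in $t$.

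\textbf{Step 2 (uniform error smallness).} Compactness modulo symmetries, together with $N(t)\in[N_-,N_+]$, gives the following: for every $\eps>0$ there is $A_0$ such that
\[
  \int_{|x-x(t)|\ge A_0}\bigl(|\nabla U|^2+|U|^6\bigr)dx<\eps \quad\text{for all } t\ge0 .
\]
For the $L^2$ term, Hölder on the annulus gives $R^{-2}\int_{R\le|x-z|\le2R}|U|^2\lesssim\|U\|_{L^6(|x-z|\ge R)}^2$. This is also small once $R\ge D_x(T)+A_0$. Indeed, for $t\le T$ we have $\{|x-z|\ge R\}\subset\{|x-x(t)|\ge A_0\}$. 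Hence, with $R_T=D_x(T)+A_0$ and $\eps$ small, we obtain $|\Err_{R_T}(t;z)|\le\eta/2$ on $[0,T]$. Integrating, $\Vc_{R_T}(T)-\Vc_{R_T}(0)\ge\eta T/2$.

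\textbf{Step 3 (upper bound via local mass growth).} This is the step carrying the $q$-dependence, and I expect it to be the main point to check. Cauchy--Schwarz gives
\[
  |\Vc_R(t)|\lesssim R\,\|\nabla U(t)\|_{L^2}\,\|U(t)\|_{L^2(B(z,2R))}.
\]
The gradient factor is bounded by $\|\nabla W\|_{L^2}$. Hölder on the ball gives $\|U(t)\|_{L^2(B(z,2R))}\lesssim R^{3(1/2-1/q)}M_q(U)=R^{\theta(q)}M_q(U)$. For $q=6$, $M_6<\infty$ follows from Sobolev. Thus
\[
  \sup_{t\in[0,T]}|\Vc_{R_T}(t)|\lesssim M_q(U)\,\bigl(D_x(T)+A_0\bigr)^{1+\theta(q)}\lesssim_{A_0}\bigl(D_x(T)+1\bigr)^{1+\theta(q)},
\]
and this bound is uniform in $T$ because $A_0$ does not depend on $T$.

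\textbf{Step 4 (conclusion).} Combining the two bounds gives $\eta T\lesssim_{A_0,M_q}(D_x(T)+1)^{1+\theta(q)}$ for every $T>0$. Apply this along a sequence $T_n\to\infty$ realizing the $\liminf$ in \eqref{eq:rigidity-drift-general}: the right side divided by $T_n$ tends to $0$, while the left side divided by $T_n$ equals $\eta>0$. This is a contradiction, so $U\equiv0$.

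The only delicate points are that $A_0$ must be chosen before $T$, depending only on the compact family and $N_\pm$, and that the annulus $L^2$ error is handled through $L^6$ tails rather than $M_q$. This is what lets the same truncation work on the whole window without any regularity of $x(t)$.
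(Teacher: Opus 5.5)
Your proposal is correct and follows the paper's proof essentially step for step. Coercivity gives a uniform lower bound $K(U(t))\ge\kappa_0>0$. Compactness together with the drift geometry $R_T=D_x(T)+A_0$ makes the virial error uniformly small on each window $[0,T]$, with the annular $L^2$ term absorbed into the $L^6$ tail by H\"older. The $L^q$ bound gives the action bound $\lesssim R_T^{1+\theta(q)}$, and this contradicts linear virial growth along the $\liminf$ sequence. The only cosmetic difference is that you apply H\"older directly on the fixed ball $B(z,2R_T)$ using the global bound $M_q(U)$, whereas the paper passes through the inclusion $B(z,2R_T)\subset B(x(t),3R_T)$ and the local mass function $\Phi$; your version is slightly more direct and equally valid.
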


Theorem \ref{thm:rigidity-general} gives four closed regimes. First, taking $q=6$, Sobolev embedding gives $\theta(6)=1$, and the closure condition reduces to
\[
  \frac{(D_x(T)+1)^2}{T}\to0
  \quad\text{along some sequence }T\to\infty.
\]
In particular, it excludes $D_x(T)=o(T^{1/2})$. Second, if $U$ is a standard critical element satisfying the no-waste Duhamel structure, then the Killip--Visan type $L^p$ breach gives $U\in L_t^\infty L_x^p$, $4<p<6$. In this case $\theta(p)>3/4$ but may be taken arbitrarily close to $3/4$, and hence one excludes
\[
  D_x(T)=o(T^\beta),
  \qquad \beta<\frac47.
\]
Third, if in addition the endpoint bound $U\in L_t^\infty L_x^4([0,\infty)\times\R^3)$ holds, then $q=4$ and the theorem excludes
\begin{equation}\label{eq:L4-endpoint-sequence-intro}
  \liminf_{T\to\infty}\frac{(D_x(T)+1)^{7/4}}{T}=0,
\end{equation}
in particular $D_x(T)=o(T^{4/7})$. Fourth, if there is an additional finite-mass input, or a finite-mass input obtained from negative regularity by interpolation,
\[
  U\in L_t^\infty L_x^2([0,\infty)\times\R^3),
\]
then $q=2$, and the complete sublinear drift condition
\[
  D_x(T)=o(T)
\]
is enough to exclude the critical element. This shows that pushing the endpoint $T^{4/7}$ low-speed version to the full sublinear version depends not on the virial computation itself, but on whether one can obtain finite mass or an equivalent negative-regularity control for three-dimensional non-radial compact critical solutions.

If $u$ is a global strong solution on $[0,\infty)$ and there exists $u_+\in\dot H^1(\R^3)$ such that
\[
  \lim_{t\to\infty}\|u(t)-e^{it\Delta}u_+\|_{\dot H^1}=0,
\]
we say that $u$ scatters forward in time. The second result is a conditional scattering criterion. To avoid confusion with the set notation used in Section \ref{sec:notation}, we denote the family of initial data by $\mathfrak A$. The criterion says that, once every counterexample can be reduced to a low-speed soliton-like channel excluded by Theorem \ref{thm:rigidity-general}, scattering follows.

\begin{theorem}[Conditional scattering]\label{thm:scattering-conditional}
Let $\mathfrak A\subset\dot H^1(\R^3)$ be a family of below-threshold initial data, that is, every $u_0\in\mathfrak A$ satisfies
\[
  E(u_0)<E(W),
  \qquad \|\nabla u_0\|_{L^2}<\|\nabla W\|_{L^2}.
\]
Assume that $\mathfrak A$ has the following low-speed soliton-like critical-element reduction property: if there exists $u_0\in\mathfrak A$ such that the corresponding forward global strong solution $u$ satisfies $S_{[0,\infty)}(u)=\infty$, then there exists a nonzero global strong solution $U$ satisfying all assumptions of Theorem \ref{thm:rigidity-general}. Then every forward global strong solution $u$ corresponding to initial data $u_0\in\mathfrak A$ satisfies
\[
  S_{[0,\infty)}(u)<\infty,
\]
and hence scatters forward in time.
\end{theorem}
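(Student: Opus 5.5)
The argument is by contradiction, and it simply combines the two hypotheses. Suppose some $u_0\in\mathfrak A$ has a forward global strong solution $u$ with $S_{[0,\infty)}(u)=\infty$. The reduction property assumed for $\mathfrak A$ then supplies a nonzero global strong solution $U$ satisfying every hypothesis of Theorem~\ref{thm:rigidity-general}. These hypotheses are:
\begin{itemize}
\item the below-threshold conditions $E(U_0)<E(W)$ and $\|\nabla U_0\|_{L^2}<\|\nabla W\|_{L^2}$;
\item precompactness modulo $N(t)$ and $x(t)$, with $N_-\le N(t)\le N_+$;
\item the bound $M_q(U)<\infty$ for some $q\in[2,6]$;
\item the sequential drift condition \eqref{eq:rigidity-drift-general}.
\end{itemize}
Theorem~\ref{thm:rigidity-general} therefore gives $U\equiv0$, contradicting $U\neq0$. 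Hence $S_{[0,\infty)}(u)<\infty$ for every forward global strong solution $u$ with data in $\mathfrak A$.

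The second step upgrades finite scattering norm to scattering in $\dot H^1$. This is the standard argument from the local theory cited in the introduction. Finiteness of $S_{[0,\infty)}(u)$ lets us partition $[0,\infty)$ into finitely many intervals $J_k$ on which $\|u\|_{L^{10}_{t,x}(J_k)}$ is small.

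On each $J_k$ we run the Strichartz bootstrap for the Duhamel formula. Together with the uniform bound $\sup_t\|\nabla u(t)\|_{L^2}<\|\nabla W\|_{L^2}$, which comes from the energy trapping in the potential well, this gives $\|\nabla u\|_{L^{10}_tL^{30/13}_x([0,\infty))}<\infty$. The endpoint case is unnecessary here; any admissible norm suffices.

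We then set
\[
u_+:=u_0+i\int_0^\infty e^{-is\Delta}\bigl(|u|^4u\bigr)(s)\,ds .
\]
By the dual Strichartz estimate and the fractional chain rule,
\[
\|u(t)-e^{it\Delta}u_+\|_{\dot H^1}\lesssim \|\nabla(|u|^4u)\|_{L^2_tL^{6/5}_x([t,\infty))}\lesssim \|u\|_{L^{10}_{t,x}([t,\infty))}^4\|\nabla u\|_{L^{10}_tL^{30/13}_x([t,\infty))}.
\]
The right-hand side tends to $0$ as $t\to\infty$.

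There is no genuine obstacle, because all the analytic difficulty is contained in Theorem~\ref{thm:rigidity-general} and in the assumed reduction property. The only point requiring care is that the object produced by the reduction satisfies the hypotheses of Theorem~\ref{thm:rigidity-general} verbatim. In particular, it must be forward-global on $[0,\infty)$ with the stated compactness on $t\ge0$, and this is exactly what the reduction property asserts.
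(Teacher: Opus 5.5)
Your proposal is correct and follows essentially the same route as the paper: the reduction property plus Theorem~\ref{thm:rigidity-general} rule out $S_{[0,\infty)}(u)=\infty$. The step from finite norm to scattering is the paper's Proposition~\ref{prop:finite-S-scattering}, which uses a Cauchy criterion for $e^{-it\Delta}u(t)$ instead of your explicit formula for $u_+$, an immaterial difference. One small inconsistency: your final bound passes through the dual endpoint norm $L^2_tL^{6/5}_x$, which does rely on the Keel--Tao endpoint (available in three dimensions). So either drop the remark that the endpoint is unnecessary, or use $\|\nabla(|u|^4u)\|_{L^{10/7}_{t,x}}\lesssim\|u\|_{L^{10}_{t,x}}^4\|\nabla u\|_{L^{10/3}_{t,x}}$ instead.
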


The remainder of the paper is organized as follows. Section \ref{sec:notation} fixes the energy space, the Strichartz notation, the sharp Sobolev threshold, and the below-threshold coercivity. Section \ref{sec:compactness} converts critical compactness into uniform tail smallness and records the geometric consequences of the drift radius. Section \ref{sec:Lq} proves the local mass growth under an $L^q$ bound and explains how the $L^p$ breach, the endpoint $L^4$ input, and the finite-mass input change the drift exponent. Section \ref{sec:virial} proves the fixed-center localized virial identity, the truncation error estimate, and the action upper bound. Section \ref{sec:main-proofs} proves Theorems \ref{thm:rigidity-general} and \ref{thm:scattering-conditional} in a unified way.

\section{Notation, thresholds, and coercivity}\label{sec:notation}

This section keeps the notation $\dot H^1$, $E$, $K$, $S_J$, $D_x$, and $\theta(q)$ introduced in the introduction, and only adds the function-space norms and variational threshold facts needed later. Unless otherwise specified, all Lebesgue norms are taken over $\R^3$. For a time interval $J$, set
\begin{equation*}
  \|u\|_{L_t^qL_x^r(J\times\R^3)}
  :=\left(\int_J\left(\int_{\R^3}|u(t,x)|^r\,dx\right)^{q/r}\,dt\right)^{1/q},
\end{equation*}
where, when $q=\infty$, the outer norm is the essential supremum over $t\in J$, and, when $r=\infty$, the inner norm is the essential supremum over $x\in\R^3$. A pair $(q,r)$ is called three-dimensional Schr\"odinger admissible if
\begin{equation*}
  2\le q,r\le\infty,
  \qquad (q,r,3)\ne(2,\infty,3),
  \qquad \frac2q+\frac3r=\frac32.
\end{equation*}
For such a pair, $q'$ and $r'$ denote the H\"older conjugates, defined by
\begin{equation*}
  \frac1q+\frac1{q'}=1,
  \qquad
  \frac1r+\frac1{r'}=1,
  \qquad \frac1\infty:=0.
\end{equation*}
Define
\begin{equation*}
  \|u\|_{\Sd(J)}:=\sup_{(q,r)\text{ admissible}}\|\nabla u\|_{L_t^qL_x^r(J\times\R^3)},
\end{equation*}
and the dual nonlinear norm
\begin{equation*}
  \|F\|_{\Nd(J)}
  :=\inf_{(q,r)\text{ admissible}}\|\nabla F\|_{L_t^{q'}L_x^{r'}(J\times\R^3)}.
\end{equation*}
For an arbitrary reference time $t_0\in J$, the Strichartz estimates give
\begin{equation*}
  \left\|\int_{t_0}^{t}e^{i(t-s)\Delta}F(s)\,ds\right\|_{\Sd(J)}
  \lesssim \|F\|_{\Nd(J)}.
\end{equation*}
For the energy-critical nonlinearity $F(u)=|u|^4u$, the chain rule and H\"older's inequality yield the standard estimate
\begin{equation}\label{eq:critical-nonlinearity-estimate}
  \||u|^4u\|_{\Nd(J)}
  \lesssim \|u\|_{L_{t,x}^{10}(J\times\R^3)}^4\|u\|_{\Sd(J)}.
\end{equation}
Therefore, if $\|e^{i(t-t_0)\Delta}u(t_0)\|_{L_{t,x}^{10}(J\times\R^3)}$ is sufficiently small, a strong solution on $J$ is constructed by contraction. We only need the following direct consequence. If on an interval $J$ one has $S_J(u)\le\eta$, with $\eta>0$ sufficiently small, then Duhamel's formula, Strichartz estimates, and \eqref{eq:critical-nonlinearity-estimate} give
\[
\begin{aligned}
  \|u\|_{\Sd(J)}
  &\lesssim \|u(t_0)\|_{\dot H^1}
     +\||u|^4u\|_{\Nd(J)}  \\
  &\lesssim \|u(t_0)\|_{\dot H^1}
     +S_J(u)^4\|u\|_{\Sd(J)}.
\end{aligned}
\]
Choosing $\eta$ so that the implicit constant times $\eta^4$ is less than $1/2$, the last term on the right can be absorbed, and one obtains
\[
  \|u\|_{\Sd(J)}\lesssim \|u(t_0)\|_{\dot H^1}.
\]
If $S_{[0,\infty)}(u)<\infty$, then $[0,\infty)$ can be divided into finitely or countably many $S$-small intervals, and the $S$ norm on tail intervals tends to zero. This fact will be used in the Cauchy argument showing that finite critical norm implies scattering.

To avoid repeatedly writing the gradient and critical nonlinear terms, set
\begin{equation*}
  A(f):=\|\nabla f\|_{L^2}^2,
  \qquad B(f):=\|f\|_{L^6}^6,
  \qquad A_W:=A(W).
\end{equation*}
From the definitions of $E$ and $K$ in the introduction one has
\begin{equation}\label{eq:E-via-A-K}
  E(f)=\frac13A(f)+\frac16K(f),
  \qquad A(f)=3E(f)-\frac12K(f).
\end{equation}
The first identity is particularly useful in proving a positive virial gap for nonzero solutions.

We next record two local $L^2$ estimates that will be used to bound the virial action. The Sobolev inequality gives, for a constant $C_3>0$,
\begin{equation*}
  \|f\|_{L^6}\le C_3\|\nabla f\|_{L^2}.
\end{equation*}
Hence, for any $R>0$ and $z\in\R^3$,
\begin{equation}\label{eq:local-L2-basic}
  \|f\|_{L^2(B(z,R))}
  \le |B(z,R)|^{1/3}\|f\|_{L^6}
  \lesssim R\|\nabla f\|_{L^2}.
\end{equation}
If $f\in L^q(\R^3)$, $2\le q\le6$, then
\begin{equation}\label{eq:local-L2-Lq}
  \|f\|_{L^2(B(z,R))}
  \le |B(z,R)|^{\frac12-\frac1q}\|f\|_{L^q}
  \lesssim R^{\theta(q)}\|f\|_{L^q},
\end{equation}
where $\theta(q)$ is the exponent defined in the introduction.

We now return to the below-threshold structure. Let $C_{\rm opt}>0$ denote the optimal constant in the sharp Sobolev inequality, written as
\begin{equation*}
  \|f\|_{L^6}^2\le C_{\rm opt}\|\nabla f\|_{L^2}^2.
\end{equation*}
and its optimizers are given by the Aubin--Talenti family. Our chosen $W$ satisfies
\begin{equation*}
  -\Delta W=W^5.
\end{equation*}
Multiplying by $W$ and integrating gives
\begin{equation*}
  \|\nabla W\|_{L^2}^2=\|W\|_{L^6}^6.
\end{equation*}
Thus
\begin{equation*}
  K(W)=0,
  \qquad E(W)=\frac13\|\nabla W\|_{L^2}^2.
\end{equation*}

By the sharp Sobolev inequality and the normalization of the ground state,
\begin{equation*}
  B(f)\le A_W\left(\frac{A(f)}{A_W}\right)^3.
\end{equation*}
Let
\begin{equation*}
  y:=\frac{A(f)}{A_W}.
\end{equation*}
Then
\begin{align}
  E(f)&\ge A_W\left(\frac12y-\frac16y^3\right),\label{eq:E-lower-y}\\
  K(f)&\ge A_W(y-y^3).\label{eq:K-lower-y}
\end{align}

The one-dimensional function
\begin{equation*}
  g(y):=\frac12y-\frac16y^3,
  \qquad g'(y)=\frac12(1-y^2),
\end{equation*}
is strictly increasing on $[0,1]$ and satisfies $g(1)=1/3$. Therefore the below-threshold assumption, with $y(t):=A(u(t))/A_W$, gives not only $y(t)<1$, but also a quantitative gap. More precisely, if
\begin{equation*}
  \eta_E:=1-\frac{E(u_0)}{E(W)}>0,
\end{equation*}
then there exist $\delta=\delta(\eta_E,\|\nabla u_0\|_{L^2}/\|\nabla W\|_{L^2})>0$ and $c(\delta)>0$ such that
\begin{equation*}
  \frac{A(u(t))}{A(W)}\le1-\delta,
  \qquad K(u(t))\ge c(\delta)A(u(t)).
\end{equation*}
Below we only need the existence of such a gap, not the explicit form of $c(\delta)$. Applying this one-dimensional potential-well estimate to the flow gives the below-threshold branch preservation.

\begin{lemma}[Below-threshold branch]\label{lem:coercivity-branch}
Let $u$ be an energy solution to \eqref{eq:NLS}, and assume
\begin{equation*}
  E(u_0)<E(W),
  \qquad \|\nabla u_0\|_{L^2}<\|\nabla W\|_{L^2}.
\end{equation*}
Then there exists $\delta_1=\delta_1(u_0)>0$ such that, for all $t$,
\begin{equation*}
  \|\nabla u(t)\|_{L^2}^2\le(1-\delta_1)\|\nabla W\|_{L^2}^2,
\end{equation*}
and
\begin{equation}\label{eq:K-coercive-gradient}
  K(u(t))\ge\delta_1\|\nabla u(t)\|_{L^2}^2.
\end{equation}
\end{lemma}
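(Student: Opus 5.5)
The approach is a continuity (energy-trapping) argument on the scalar $y(t):=A(u(t))/A_W$, combined with the one-dimensional estimates \eqref{eq:E-lower-y}--\eqref{eq:K-lower-y} and conservation of energy.

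\emph{Step 1: a strict gap $y_*<1$.} Set $\eta_E:=1-E(u_0)/E(W)>0$; the case $E(u_0)\le0$ will be handled separately below. Since $g$ is strictly increasing on $[0,1]$ with $g(1)=1/3$ and $E(W)=\tfrac13A_W$, there is a unique $y_*\in[0,1)$ with
\[
  A_W g(y_*)=E(u_0)=(1-\eta_E)E(W).
\]
Here $y_*<1$ depends only on $\eta_E$. By \eqref{eq:E-lower-y} and conservation of energy, for every $t$ in the lifespan,
\[
  A_W g(y(t))\le E(u(t))=E(u_0).
\]
Hence $g(y(t))\le g(y_*)$. Since $g$ is strictly increasing on $[0,1]$ and $g>g(y_*)$ on $(y_*,1]$, we get the dichotomy: either $y(t)\le y_*$ or $y(t)>1$.

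\emph{Step 2: continuity.} For an energy solution, $t\mapsto u(t)\in\dot H^1$ is continuous, so $y(t)$ is continuous on the (connected) lifespan. Since $y(0)<1$, Step 1 gives $y(0)\le y_*$. The set $\{t: y(t)\le y_*\}$ is closed, and by the dichotomy its complement $\{y>1\}$ is also closed. Connectedness of the interval then gives $y(t)\le y_*$ for all $t$. If $E(u_0)\le0$, then $y(0)\le y_*$ still holds with $y_*$ replaced by $0$. Indeed, $g(y(0))\le0$ with $y(0)<1$ forces $y(0)=0$, so $u_0=0$, and the statement is trivial for the zero solution.

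\emph{Step 3: coercivity of $K$.} Setting $\delta_1':=1-y_*>0$ gives the first claim, $A(u(t))\le(1-\delta_1')A_W$. For the second, I would use \eqref{eq:K-lower-y} in the form
\[
  K(u(t))\ge A(u(t))\,(1-y(t)^2)\ge (1-y_*^2)\,A(u(t)).
\]
This follows from $B(f)\le A_W y^3=A(f)\,y^2$. So one may take
\[
  \delta_1:=\min\{1-y_*,\;1-y_*^2\}=1-y_*.
\]
This yields \eqref{eq:K-coercive-gradient}.

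The main point needing care is Step 2. The trapping relies on continuity of $t\mapsto\|\nabla u(t)\|_{L^2}$ for energy solutions and on the fact that the energy level strictly below $E(W)$ separates the two branches $y\le y_*$ and $y>1$. Everything else is the elementary monotonicity of $g$.
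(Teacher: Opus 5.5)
Your proposal is correct and follows essentially the same route as the paper. Both proofs trap $y(t)=A(u(t))/A_W$ below $1$ using \eqref{eq:E-lower-y}, energy conservation, the monotonicity of $g$ on $[0,1]$ and continuity in time, and then deduce coercivity from $K(f)\ge A(f)(1-y^2)$, which is \eqref{eq:K-lower-y}.

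Your version is more explicit than the paper's, which only asserts that a gap exists. You identify it as $\delta_1=1-y_*$ with $A_Wg(y_*)=E(u_0)$. You also correctly note that the excluded value $y=1$ makes $\{t:y(t)>1\}=\{t:y(t)\ge1\}$ closed, and you dispose of the degenerate case $E(u_0)\le0$, which forces $u_0=0$.
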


\begin{proof}
Set
\begin{equation*}
  y(t):=\frac{\|\nabla u(t)\|_{L^2}^2}{\|\nabla W\|_{L^2}^2}.
\end{equation*}
By \eqref{eq:E-lower-y} and conservation of energy,
\begin{equation}\label{eq:energy-barrier}
  E(W)>E(u_0)=E(u(t))
  \ge A_W\left(\frac12y(t)-\frac16y(t)^3\right).
\end{equation}
The function $g(y)$ defined above is strictly increasing on $[0,1]$ and satisfies
\begin{equation*}
  g(1)=\frac13=\frac{E(W)}{A_W}.
\end{equation*}
The initial condition gives $y(0)<1$. By the continuity of $t\mapsto y(t)$ and the energy barrier \eqref{eq:energy-barrier}, the orbit cannot cross $y=1$, and there exists $\delta_1>0$ such that
\begin{equation*}
  y(t)\le1-\delta_1.
\end{equation*}
Using \eqref{eq:K-lower-y}, we obtain
\begin{equation*}
  K(u(t))\ge A_Wy(t)(1-y(t)^2)
  \ge c(\delta_1)A_Wy(t)
  =c(\delta_1)\|\nabla u(t)\|_{L^2}^2.
\end{equation*}
Renaming the constant gives \eqref{eq:K-coercive-gradient}.
\end{proof}

The next step turns gradient coercivity into a strictly positive virial gap; this is used directly in the lower bound for the localized virial derivative.

\begin{lemma}[Positive virial gap]\label{lem:positive-K}
Under the assumptions of Lemma \ref{lem:coercivity-branch}, if $u\not\equiv0$, then there exists
\begin{equation*}
  \kappa_0=\kappa_0(u_0)>0
\end{equation*}
such that
\begin{equation}\label{eq:K-kappa0}
  K(u(t))\ge\kappa_0,
  \qquad t\ge0.
\end{equation}
\end{lemma}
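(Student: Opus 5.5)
The plan is to combine the gradient coercivity \eqref{eq:K-coercive-gradient} of Lemma~\ref{lem:coercivity-branch} with a uniform positive lower bound on $\|\nabla u(t)\|_{L^2}^2$. This reduces the claim to showing that the kinetic energy of a nonzero below-threshold solution cannot become small. The source of that lower bound will be energy conservation, not compactness, since the lemma is stated for arbitrary energy solutions.

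The first step is to show that $E(u_0)>0$ whenever $u\not\equiv0$. By \eqref{eq:E-via-A-K}, $E(u_0)=\frac13A(u_0)+\frac16K(u_0)$. Lemma~\ref{lem:coercivity-branch} at $t=0$ gives $K(u_0)\ge\delta_1A(u_0)\ge0$, so $E(u_0)\ge\frac13A(u_0)$. If $u\not\equiv0$, then $u_0\ne0$ in $\dot H^1$ by uniqueness of strong solutions, since the zero solution has zero data. Hence $A(u_0)>0$ and $E(u_0)>0$.

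The second step converts conservation of energy into a lower bound on $A(u(t))$. The bound $K\ge0$ only gives $E\ge\frac13A$, which is the wrong direction, so instead we use $E(u(t))\le\frac12A(u(t))$. This holds because $E=\frac12A-\frac16B$ and $B\ge0$. Energy conservation then yields $A(u(t))\ge2E(u(t))=2E(u_0)>0$ for every $t\ge0$. Inserting this into \eqref{eq:K-coercive-gradient} gives
\[
  K(u(t))\ge\delta_1A(u(t))\ge2\delta_1E(u_0)=:\kappa_0,
\]
and $\kappa_0$ depends only on $u_0$, as claimed.

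Neither step is delicate; the only subtle point is the direction of the inequality in the second step, handled as above. The remaining care is in the first step: one must check that $u\not\equiv0$ forces $u_0\neq0$. This uses uniqueness of strong solutions, equivalently the conservation of $E$ and the fact that the zero solution has zero energy.
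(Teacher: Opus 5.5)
Your proposal is correct and follows the paper's proof essentially step for step. You obtain $E(u_0)>0$ from $E=\frac13A+\frac16K$, $K\ge0$, and $u_0\neq0$ (by uniqueness), then $A(u(t))\ge 2E(u_0)$ from $E\le\frac12A$, which gives the same constant $\kappa_0=2\delta_1E(u_0)$; evaluating directly at $t=0$ is a slightly cleaner phrasing of the paper's ``at some time'' step.
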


\begin{proof}
By \eqref{eq:K-coercive-gradient},
\begin{equation*}
  K(u(t))\ge\delta_1A(u(t)).
\end{equation*}
On the other hand, by \eqref{eq:E-via-A-K} and \eqref{eq:K-coercive-gradient}, whenever $A(u(t))>0$ at some time,
\begin{equation*}
  E(u_0)=E(u(t))=\frac13A(u(t))+\frac16K(u(t))>0.
\end{equation*}
A nonzero strong solution cannot have $A(u(t))=0$ at any time, since uniqueness would then imply $u\equiv0$. Hence $E(u_0)>0$. Also,
\begin{equation*}
  E(u(t))=\frac12A(u(t))-\frac16B(u(t))\le\frac12A(u(t)),
\end{equation*}
and therefore
\begin{equation*}
  A(u(t))\ge2E(u_0)>0.
\end{equation*}
Taking
\begin{equation*}
  \kappa_0:=2\delta_1E(u_0)
\end{equation*}
proves the claim.
\end{proof}

We conclude this section with a standard consequence of the local theory: a finite critical scattering norm is enough to construct the scattering state.

\begin{proposition}[Finite norm scattering]\label{prop:finite-S-scattering}
Let $u$ be a global strong solution on $[0,\infty)$ and assume
\begin{equation*}
  S_{[0,\infty)}(u)<\infty.
\end{equation*}
Then there exists $u_+\in\dot H^1(\R^3)$ such that
\begin{equation*}
  \lim_{t\to\infty}\|u(t)-e^{it\Delta}u_+\|_{\dot H^1}=0.
\end{equation*}
\end{proposition}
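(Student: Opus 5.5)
The plan is the standard Cauchy-criterion argument in $\dot H^1$ for the profile $e^{-it\Delta}u(t)$. By Duhamel's formula, for $0\le t_1<t_2$,
\[
  e^{-it_2\Delta}u(t_2)-e^{-it_1\Delta}u(t_1)
  = i\int_{t_1}^{t_2}e^{-is\Delta}\bigl(|u|^4u\bigr)(s)\,ds .
\]
Since $e^{it\Delta}$ is unitary on $\dot H^1$, it suffices to show that the $\dot H^1$ norm of the right-hand side tends to zero as $t_1,t_2\to\infty$. We then define $u_+$ as the $\dot H^1$ limit of $e^{-it\Delta}u(t)$. Unitarity then gives $\|u(t)-e^{it\Delta}u_+\|_{\dot H^1}=\|e^{-it\Delta}u(t)-u_+\|_{\dot H^1}\to0$.

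\textbf{Step 1: global Strichartz bound.} Fix $\eta>0$ as in the small-interval estimate of this section. Since $S_{[0,\infty)}(u)<\infty$, we can split $[0,\infty)$ into finitely many consecutive intervals $J_1,\dots,J_m$ with $S_{J_k}(u)\le\eta$ on each. The number $m$ is controlled by $(S_{[0,\infty)}(u)/\eta)^{10}+1$, since the $L^{10}_{t,x}$ norm raised to the tenth power is additive in time. On each $J_k$ the absorption argument gives $\|u\|_{\Sd(J_k)}\lesssim\|u(t_k)\|_{\dot H^1}$, where $t_k$ is the left endpoint of $J_k$. The quantity $\|u(t_k)\|_{\dot H^1}$ is bounded uniformly: in our below-threshold setting by Lemma~\ref{lem:coercivity-branch}, and in general by iterating the Strichartz bound along the finitely many intervals. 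Summing over $k$ yields $\|u\|_{\Sd([0,\infty))}<\infty$.

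\textbf{Step 2: tail smallness of the Duhamel integral.} By the dual homogeneous Strichartz estimate $\bigl\|\int_{t_1}^{t_2}e^{-is\Delta}F(s)\,ds\bigr\|_{\dot H^1}\lesssim\|F\|_{\Nd([t_1,t_2])}$ and by \eqref{eq:critical-nonlinearity-estimate},
\[
  \Bigl\|\int_{t_1}^{t_2}e^{-is\Delta}|u|^4u\,ds\Bigr\|_{\dot H^1}
  \lesssim S_{[t_1,\infty)}(u)^4\,\|u\|_{\Sd([0,\infty))}.
\]
Because $\int_0^\infty\!\int|u|^{10}\,dx\,dt<\infty$, dominated convergence gives $S_{[t_1,\infty)}(u)\to0$ as $t_1\to\infty$. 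Hence the sequence is Cauchy, and the completeness of $\dot H^1(\R^3)$ provides $u_+$.

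\textbf{Main obstacle.} The only delicate point is Step 1, namely the passage from finite $L^{10}_{t,x}$ norm to a finite global $\Sd$ norm. This requires the finite decomposition into $\eta$-small intervals and a uniform bound on $\|u(t)\|_{\dot H^1}$. We also need the dual Strichartz estimate in the form stated, where the inhomogeneous estimate is applied at the fixed time $t_2$ after conjugating by $e^{-it_2\Delta}$. Everything else is routine.
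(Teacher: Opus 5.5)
Your proposal is correct and follows the paper's route. You split $[0,\infty)$ into $L^{10}_{t,x}$-small intervals to get a global $\dot S^1$ bound. You then use Duhamel, the dual Strichartz estimate and the nonlinear estimate to show that $e^{-it\Delta}u(t)$ is Cauchy in $\dot H^1$, with $S_{[t_1,\infty)}(u)\to 0$. Your version is slightly more careful than the paper's ``finitely or countably many'' intervals and ``stability iteration'': you note that finitely many intervals suffice, with an explicit count, and you iterate the $\dot H^1$ bound across them.
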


\begin{proof}
By local theory and continuity, $[0,\infty)$ can be partitioned into finitely or countably many intervals $J_k$ on which $\|u\|_{L_{t,x}^{10}(J_k)}$ is sufficiently small. Using \eqref{eq:critical-nonlinearity-estimate} and a stability iteration yields
\begin{equation*}
  \|u\|_{\Sd([0,\infty))}<\infty.
\end{equation*}
For $0<T_1<T_2$, Duhamel's formula and the dual Strichartz estimate give
\begin{align*}
  &\left\|e^{-iT_2\Delta}u(T_2)-e^{-iT_1\Delta}u(T_1)\right\|_{\dot H^1}\\
  &\qquad\le
  \left\|\int_{T_1}^{T_2}e^{-is\Delta}\bigl(|u(s)|^4u(s)\bigr)\,ds\right\|_{\dot H^1}
  \lesssim \||u|^4u\|_{\Nd([T_1,T_2])}\\
  &\qquad\lesssim
  \|u\|_{L_{t,x}^{10}([T_1,T_2]\times\R^3)}^4
  \|u\|_{\Sd([T_1,T_2])}.
\end{align*}
Since $S_{[T_1,\infty)}(u)\to0$ as $T_1\to\infty$, the right-hand side tends to zero. Hence $e^{-it\Delta}u(t)$ is Cauchy in $\dot H^1$, and its limit is $u_+$.
\end{proof}

\section{Critical compactness, tails, and drift}\label{sec:compactness}

Throughout this section, set $z:=x(0)$, and recall that $\Subset$ denotes relative compactness. Set
\begin{equation*}
  v(t,y):=N(t)^{-1/2}u\left(t,x(t)+\frac{y}{N(t)}\right).
\end{equation*}
Critical compactness is written as
\begin{equation}\label{eq:compactness-v}
  \{v(t):t\ge0\}\Subset\dot H^1(\R^3).
\end{equation}
The soliton-like bounded-scale condition is written as
\begin{equation}\label{eq:N-bounded}
  0<N_-\le N(t)\le N_+<\infty,
  \qquad t\ge0.
\end{equation}
Under \eqref{eq:N-bounded}, the physical tails can be controlled uniformly. We first record a tail criterion that depends only on precompactness.

\begin{lemma}[Tail criterion]\label{lem:precompact-tail-criterion}
Let $\mathcal K\Subset\dot H^1(\R^3)$. Then
\begin{equation}\label{eq:precompact-tail-criterion}
  \lim_{A\to\infty}\sup_{f\in\mathcal K}
  \left(
  \int_{|y|\ge A}|\nabla f(y)|^2\,dy
  +\int_{|y|\ge A}|f(y)|^6\,dy
  \right)=0.
\end{equation}
\end{lemma}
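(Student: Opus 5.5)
The plan is the standard $\varepsilon/3$ argument combining total boundedness of $\mathcal K$ with the fact that a single $\dot H^1$ function has vanishing tails. Write
\[
  \tau_A(f):=\int_{|y|\ge A}|\nabla f|^2\,dy+\int_{|y|\ge A}|f|^6\,dy .
\]
Since $\tau_A(f)$ is nonincreasing in $A$ and nonnegative, it suffices to show that for every $\varepsilon>0$ there is $A_\varepsilon$ with $\sup_{f\in\mathcal K}\tau_{A_\varepsilon}(f)\le\varepsilon$.

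\textbf{Step 1: a single function.} For one fixed $g\in\dot H^1(\R^3)$ we have $\nabla g\in L^2$, and by the Sobolev inequality $\|g\|_{L^6}\le C_3\|\nabla g\|_{L^2}$ also $g\in L^6$. Dominated (or monotone) convergence applied to $|\nabla g|^2\mathbf 1_{|y|\ge A}$ and $|g|^6\mathbf 1_{|y|\ge A}$ then gives $\tau_A(g)\to0$ as $A\to\infty$. By taking a maximum over finitely many thresholds, the same holds uniformly over any finite family $g_1,\dots,g_m$.

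\textbf{Step 2: reduction to a finite net.} Since $\mathcal K$ is relatively compact, it is totally bounded. Fix $\eta>0$, to be chosen in terms of $\varepsilon$, and pick $g_1,\dots,g_m\in\dot H^1$ such that every $f\in\mathcal K$ satisfies $\|\nabla(f-g_j)\|_{L^2}<\eta$ for some $j$.

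\textbf{Step 3: comparison on the exterior region.} For the gradient part, the triangle inequality on $L^2(\{|y|\ge A\})$ gives
\[
  \|\nabla f\|_{L^2(|y|\ge A)}\le \eta+\|\nabla g_j\|_{L^2(|y|\ge A)} .
\]
For the $L^6$ part, the triangle inequality on $L^6(\{|y|\ge A\})$ together with the global Sobolev inequality gives
\[
  \|f\|_{L^6(|y|\ge A)}\le \|f-g_j\|_{L^6(\R^3)}+\|g_j\|_{L^6(|y|\ge A)}\le C_3\eta+\|g_j\|_{L^6(|y|\ge A)} .
\]
Now choose $A$ by Step 1 so that each $g_j$ has exterior norms at most $\eta$. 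Then
\[
  \tau_A(f)\le (2\eta)^2+\bigl((C_3+1)\eta\bigr)^6 ,
\]
uniformly in $f\in\mathcal K$, and this is at most $\varepsilon$ once $\eta$ is small enough.

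\textbf{Main obstacle.} There is no real obstacle here. The only point requiring care is the $L^6$ term: it is not part of the $\dot H^1$ norm, so the passage from $\dot H^1$-closeness to control of the $L^6$ tail must go through the global Sobolev embedding, as in Step 3, rather than through a localized version.
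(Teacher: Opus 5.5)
Your proposal is correct and follows essentially the same route as the paper: a finite $\dot H^1$-net from precompactness, Sobolev embedding to transfer $\dot H^1$-closeness to global $L^6$-closeness, a radius $A$ making the finitely many net tails small, and the triangle inequality. The only cosmetic difference is that you apply the triangle inequality in $L^6(\{|y|\ge A\})$ and then raise to the sixth power, whereas the paper uses $(a+b)^6\lesssim a^6+b^6$.
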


\begin{proof}
Given $\eps>0$, precompactness gives some $m\in\mathbb N$ and functions $f_1,\ldots,f_m\in\dot H^1$ such that every $f\in\mathcal K$ satisfies $\|f-f_j\|_{\dot H^1}<\eps$ for some $j$. Sobolev embedding gives $\|f-f_j\|_{L^6}\lesssim\eps$. For the finitely many fixed functions $f_j$, choose $A$ so large that their gradient and $L^6$ tails are all less than $\eps$. The triangle inequality and $(a+b)^6\lesssim a^6+b^6$ then imply \eqref{eq:precompact-tail-criterion}.
\end{proof}

\begin{lemma}[Compactness tails]\label{lem:compact-tail}
If \eqref{eq:compactness-v} and \eqref{eq:N-bounded} hold, then
\begin{equation}\label{eq:tail-limit}
  \lim_{A\to\infty}\sup_{t\ge0}
  \left[
  \int_{|x-x(t)|\ge A}|\nabla u(t,x)|^2\,dx
  +\int_{|x-x(t)|\ge A}|u(t,x)|^6\,dx
  \right]=0.
\end{equation}
\end{lemma}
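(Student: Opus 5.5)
The plan is to change variables so that the tail of $u(t)$ about $x(t)$ becomes a tail of the rescaled profile $v(t)$ about the origin. Then Lemma~\ref{lem:precompact-tail-criterion} applies to $\mathcal K:=\{v(t):t\ge0\}$, which is precompact by \eqref{eq:compactness-v}, and the bounded-scale condition \eqref{eq:N-bounded} turns the physical radius $A$ into a profile radius that grows at least linearly in $A$.

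Since $v(t,y)=N(t)^{-1/2}u(t,x(t)+y/N(t))$, we have
\[
  u(t,x)=N(t)^{1/2}v\bigl(t,N(t)(x-x(t))\bigr).
\]
Substituting $y=N(t)(x-x(t))$, so that $dx=N(t)^{-3}dy$, gives
\[
  \int_{|x-x(t)|\ge A}|\nabla u(t,x)|^2\,dx
  =\int_{|y|\ge N(t)A}|\nabla v(t,y)|^2\,dy.
\]
This identity holds because $|\nabla u|^2$ picks up the factor $N(t)^{3}$, which cancels against the Jacobian; this is just the $\dot H^1$ scaling invariance. In the same way,
\[
  \int_{|x-x(t)|\ge A}|u(t,x)|^6\,dx=\int_{|y|\ge N(t)A}|v(t,y)|^6\,dy,
\]
since $|u|^6$ picks up $N(t)^3$ as well.

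Because $N(t)\ge N_-$, the region $\{|y|\ge N(t)A\}$ is contained in $\{|y|\ge N_-A\}$. The bracket in \eqref{eq:tail-limit} is therefore at most
\[
  \sup_{f\in\mathcal K}\left(\int_{|y|\ge N_-A}|\nabla f|^2\,dy+\int_{|y|\ge N_-A}|f|^6\,dy\right),
\]
and this bound is uniform in $t\ge0$. As $A\to\infty$ we also have $N_-A\to\infty$, so \eqref{eq:precompact-tail-criterion} sends this supremum to zero, which proves \eqref{eq:tail-limit}.

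There is no real obstacle in this argument. The only point needing care is that the lower bound $N_-$ is the one used. The upper bound $N_+$ is not needed here; it would matter for the local $L^2$ terms appearing later, which are not scale invariant.
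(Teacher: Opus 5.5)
Your proposal is correct and follows essentially the same route as the paper: apply Lemma~\ref{lem:precompact-tail-criterion} to $\{v(t):t\ge0\}$, use the scale-invariant change of variables $y=N(t)(x-x(t))$ to turn the physical tails into profile tails beyond radius $AN(t)$, and then use only the lower bound $N(t)\ge N_-$. Your remark that $N_+$ plays no role here is also consistent with the paper's argument.
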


\begin{proof}
Apply Lemma \ref{lem:precompact-tail-criterion} to the precompact set \eqref{eq:compactness-v}. For any $\eps>0$ there exists $A_0=A_0(\eps)$ such that
\begin{equation*}
  \sup_{t\ge0}\left[
  \int_{|y|\ge A_0}|\nabla v(t,y)|^2\,dy
  +\int_{|y|\ge A_0}|v(t,y)|^6\,dy
  \right]<\eps.
\end{equation*}
The change of variables
\begin{equation*}
  y=N(t)(x-x(t))
\end{equation*}
gives
\begin{align*}
  \int_{|x-x(t)|\ge A}|\nabla u(t,x)|^2\,dx
  &=\int_{|y|\ge AN(t)}|\nabla v(t,y)|^2\,dy,\\
  \int_{|x-x(t)|\ge A}|u(t,x)|^6\,dx
  &=\int_{|y|\ge AN(t)}|v(t,y)|^6\,dy.
\end{align*}
If $A\ge A_0/N_-$, then $AN(t)\ge A_0$, and \eqref{eq:tail-limit} follows.
\end{proof}

For the later truncation-error estimates, define the tail modulus of the compact trajectory by
\begin{equation*}
  \omega(A):=\sup_{t\ge0}
  \left[
  \int_{|x-x(t)|\ge A}|\nabla u(t,x)|^2\,dx
  +\int_{|x-x(t)|\ge A}|u(t,x)|^6\,dx
  \right].
\end{equation*}
Lemma \ref{lem:compact-tail} is equivalent to
\begin{equation*}
  \lim_{A\to\infty}\omega(A)=0.
\end{equation*}
This modulus depends only on the precompact orbit and on $N_-$ and $N_+$, not on the time window $[0,T]$. All later choices of $A_0$ may be understood as follows: first fix a prescribed $\varepsilon_0>0$, then choose $A_0>1$ sufficiently large that
\begin{equation*}
  \omega(A_0)\le\varepsilon_0\kappa_0,
\end{equation*}
and finally let the virial truncation radius grow with $D_x(T)$.

It is important that no global finite mass is needed here. The $L^2$ term in the truncation error appears only on an annulus, in the form
\[
  R^{-2}\int_{R\le |x-z|\le 2R}|u(t,x)|^2\,dx.
\]
Let
\begin{equation*}
  \Omega_R:=\{x\in\R^3:R\le |x-z|\le2R\}.
\end{equation*}
In three dimensions, H\"older's inequality and the critical Sobolev exponent give
\[
\begin{aligned}
  R^{-2}\int_{\Omega_R}|u(t,x)|^2\,dx
  &\le R^{-2}|\Omega_R|^{2/3}
       \left(\int_{\Omega_R}|u(t,x)|^6\,dx\right)^{1/3}  \\
  &\lesssim \left(\int_{\Omega_R}|u(t,x)|^6\,dx\right)^{1/3},
\end{aligned}
\]
Thus, once this annulus lies in the far tail relative to the moving center, the $L^2$ annular term is also controlled by smallness of the $L^6$ tail. This is the key point that allows one to use a localized virial argument at the three-dimensional energy level.

Recall that $z=x(0)$. The drift radius defined in the introduction may then be written as
\begin{equation*}
  D_x(T)=\sup_{0\le t\le T}|x(t)-z|.
\end{equation*}
Given a tail radius $A>0$, define
\begin{equation*}
  R_T:=D_x(T)+A.
\end{equation*}
Then, for $0\le t\le T$,
\begin{equation}\label{eq:geometry-tail}
  |x-z|\ge R_T
  \quad\Longrightarrow\quad
  |x-x(t)|\ge A.
\end{equation}
Indeed,
\begin{equation*}
  |x-x(t)|\ge |x-z|-|x(t)-z|
  \ge D_x(T)+A-D_x(T)=A.
\end{equation*}
Moreover, since $D_x(T)\le R_T$,
\begin{equation}\label{eq:ball-inclusion}
  B(z,2R_T)\subset B(x(t),3R_T),
  \qquad 0\le t\le T.
\end{equation}
Figure~\ref{fig:drift-geometry} records this fixed-center geometry. It is used only to localize the moving compact core inside a single virial window on the interval $[0,T]$.

\begin{figure}[t]
\centering
\begin{tikzpicture}[scale=.82, line cap=round, line join=round, >=Latex]
  \coordinate (z) at (0,0);
  \coordinate (xt) at (2.35,.75);
  \def\RT{3.85}
  \def\A{1.05}
  \path[use as bounding box] (-4.35,-4.75) rectangle (4.55,4.15);
  \draw[very thick] (z) circle (\RT);
  \draw[thick,dashed] (xt) circle (\A);
  \fill[gray!25] (xt) circle (.32);
  \draw[thick] (xt) circle (.32);
  \draw[gray!70,dash dot,thick] (0,0) .. controls (.8,.25) and (1.1,.85) .. (1.55,.28)
      .. controls (1.85,-.03) and (2.05,.54) .. (xt);
  \fill (z) circle (.075) node[below left=5pt] {$z=x(0)$};
  \fill (xt) circle (.075) node[right=5pt] {$x(t)$};
  \draw[->,thick] (z) -- node[above,fill=white,inner sep=1pt] {$D_x(T)$} (xt);
  \draw[->,thick] (xt) -- node[above right,fill=white,inner sep=1pt] {$A$} ($(xt)+(35:\A)$);
  \draw[->,thick] (z) -- node[below right,fill=white,inner sep=1pt] {$R_T=D_x(T)+A$} (-45:\RT);
  \node[fill=white,inner sep=2pt,anchor=west] at (-3.75,3.42) {fixed ball $B(z,R_T)$};
  \node[fill=white,inner sep=2pt,anchor=west] at (2.50,1.90) {moving core $B(x(t),A)$};
  \node[fill=white,inner sep=2pt,anchor=west] at (-3.95,-4.25)
    {$|x-z|\ge R_T \Rightarrow |x-x(t)|\ge A$};
\end{tikzpicture}
\caption{Fixed-center geometry for the drift radius. The fixed virial center is $z=x(0)$, the moving concentration center is $x(t)$, and the truncation radius is $R_T=D_x(T)+A$. Thus the exterior region $|x-z|\ge R_T$ lies outside the ball $B(x(t),A)$ for all $0\le t\le T$.}
\label{fig:drift-geometry}
\end{figure}

Define the local mass growth function
\begin{equation*}
  \Phi(R):=\sup_{t\ge0}\|u(t)\|_{L^2(B(x(t),R))}.
\end{equation*}
By \eqref{eq:local-L2-basic} and boundedness of the energy, one always has the rough estimate
\begin{equation*}
  \Phi(R)\lesssim R.
\end{equation*}
If only this estimate is used, the fixed-center virial argument can exclude only the subdiffusive drift $D_x(T)=o(T^{1/2})$. If, in addition, $u\in L_t^\infty L_x^q$, $2\le q\le6$, then \eqref{eq:local-L2-Lq} gives
\begin{equation*}
  \Phi(R)\lesssim R^{\theta(q)}.
\end{equation*}
Consequently, the virial closure condition improves from $D_x(T)^2=o(T)$ to the sequential condition
\begin{equation*}
  \liminf_{T\to\infty}
  \frac{(D_x(T)+1)^{1+\theta(q)}}{T}=0.
\end{equation*}
When $q=2$, this includes the full sublinear drift condition $D_x(T)=o(T)$; when $q=4$, it becomes the endpoint condition \eqref{eq:L4-endpoint-sequence-intro}, in particular including $D_x(T)=o(T^{4/7})$; and when only $q>4$ is available and one lets $q\downarrow4$, it gives $D_x(T)=o(T^\beta)$ for every $\beta<4/7$.

\section{Local mass growth and the standard \texorpdfstring{$L^p$}{Lp} breach}\label{sec:Lq}

This section relates the local mass function $\Phi(R)$ from the previous section to a global $L^q$ bound. We first give the direct H\"older estimate.

\begin{lemma}[Local mass growth]\label{lem:local-mass-Lq}
Let $2\le q\le6$ and use the notation of the theorem. Assume
\begin{equation*}
  M_q(u):=\sup_{t\ge0}\|u(t)\|_{L^q(\R^3)}<\infty.
\end{equation*}
Then there exists a constant $C_q>0$, depending only on $q$, such that, for every $R>0$,
\begin{equation*}
  \Phi(R)
  \le C_q M_q(u)R^{\theta(q)}.
\end{equation*}
\end{lemma}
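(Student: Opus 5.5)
The plan is a direct application of H\"older's inequality on balls, uniformly in time. Fix $t\ge0$ and $R>0$ and set $B:=B(x(t),R)$. Since $2\le q\le 6$, the exponent $r$ defined by $\frac12=\frac1q+\frac1r$ satisfies $r\in[3,\infty]$, with $r=\infty$ exactly when $q=2$. H\"older's inequality on $B$ with the pair $(q,r)$ gives
\[
  \|u(t)\|_{L^2(B)}\le |B|^{\frac12-\frac1q}\,\|u(t)\|_{L^q(B)}\le |B|^{\frac12-\frac1q}\,M_q(u).
\]
This is precisely estimate \eqref{eq:local-L2-Lq} with center $z=x(t)$ and $f=u(t)$. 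When $q=2$ the volume factor equals $1$ and the bound is trivial.

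Next compute the volume factor explicitly:
\[
  |B|^{\frac12-\frac1q}=\Bigl(\tfrac{4\pi}{3}\Bigr)^{\frac12-\frac1q}R^{3(\frac12-\frac1q)}=\Bigl(\tfrac{4\pi}{3}\Bigr)^{\frac12-\frac1q}R^{\theta(q)}.
\]
So we may take $C_q:=(4\pi/3)^{1/2-1/q}$, which depends only on $q$; in fact $C_q\le(4\pi/3)^{1/3}$ on the whole range $2\le q\le 6$.

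The right-hand side $C_qM_q(u)R^{\theta(q)}$ does not depend on $t$, and it does not depend on the center $x(t)$ either, because Lebesgue measure is translation invariant. Taking the supremum over $t\ge0$ in the definition of $\Phi(R)$ therefore yields the claim.

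There is no real obstacle here. The only point to note is that $M_q(u)$ is a global-in-space bound, so the possible drift of $x(t)$ plays no role. For $q=6$, finiteness of $M_6(u)$ follows from Sobolev embedding together with energy bounds, but the lemma simply assumes $M_q(u)<\infty$ in every case.
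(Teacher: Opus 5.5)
Your proposal is correct and follows the paper's proof: Hölder's inequality on $B(x(t),R)$ with volume factor $|B|^{\frac12-\frac1q}\lesssim R^{\theta(q)}$, then a supremum over $t\ge0$. The only difference is that you write out the constant $C_q=(4\pi/3)^{\frac12-\frac1q}$, where the paper leaves it implicit.
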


\begin{proof}
By H\"older's inequality,
\begin{align*}
  \|u(t)\|_{L^2(B(x(t),R))}
  &\le |B(x(t),R)|^{\frac12-\frac1q}\|u(t)\|_{L^q}\\
  &\lesssim R^{3(\frac12-\frac1q)}M_q(u).
\end{align*}
Taking the supremum over $t$ gives the claim.
\end{proof}

This estimate turns the drift problem into an algebraic condition. If $D_x(T)=o(T^\beta)$, then
\[
  \frac{(D_x(T)+1)^{1+\theta(q)}}{T}\to0
  \qquad\text{whenever}\qquad
  \beta(1+\theta(q))<1.
\]
Substituting $\theta(6)=1$, $\theta(4)=3/4$, and $\theta(2)=0$ gives respectively $\beta<1/2$, $\beta<4/7$, and $\beta<1$. At the endpoint $q=4$, the small-$o$ condition $D_x(T)=o(T^{4/7})$ directly implies
\[
  \frac{(D_x(T)+1)^{7/4}}{T}\to0.
\]
Thus all the special drift exponents in the later virial result arise from the same function $\theta(q)$, rather than from different virial identities.

Let $\mathcal S(\R^3)$ denote the Schwartz space, $\mathcal S'(\R^3)$ the space of tempered distributions, and $\mathcal P$ the space of polynomials. For $f\in\mathcal S(\R^3)$, define
\begin{equation*}
  \widehat f(\xi):=(2\pi)^{-3/2}\int_{\R^3}e^{-ix\cdot\xi}f(x)\,dx,
\end{equation*}
and extend the Fourier transform to $\mathcal S'(\R^3)$ by duality. For $\sigma\in\R$, define
\begin{equation*}
  \dot H^\sigma(\R^3)
  :=\left\{f\in\mathcal S'(\R^3)/\mathcal P:
  |\xi|^\sigma\widehat f(\xi)\in L^2(\R^3)\right\},
\end{equation*}
with norm
\begin{equation*}
  \|f\|_{\dot H^\sigma}
  :=\left(\int_{\R^3}|\xi|^{2\sigma}|\widehat f(\xi)|^2\,d\xi\right)^{1/2}.
\end{equation*}
For a time interval $J$, also set
\begin{equation*}
  \|u\|_{L_t^\infty\dot H_x^\sigma(J\times\R^3)}
  :=\mathop{\mathrm{ess\,sup}}_{t\in J}\|u(t)\|_{\dot H^\sigma(\R^3)}.
\end{equation*}

We next record the standard critical-element input used in this paper and include the proof of the reduction used below. The no-waste Duhamel structure means that the critical element satisfies, for every $t\ge0$, the weak limiting representation
\begin{equation*}
  u(t)=-i\lim_{T\to\infty}\int_t^T e^{i(t-s)\Delta}\bigl(|u(s)|^4u(s)\bigr)\,ds
  \quad\text{in }\dot H^1,
\end{equation*}
and, in the standard two-sided critical-element setting used in the double-Duhamel step, also the backward version
\begin{equation*}
  u(t)=i\lim_{T\to-\infty}\int_T^t e^{i(t-s)\Delta}\bigl(|u(s)|^4u(s)\bigr)\,ds.
\end{equation*}
This representation removes the linear radiation component. It is usually obtained from the minimal blow-up reduction and perturbation theory, and it is the starting point for the double Duhamel and $L^p$ breach mechanisms \cite{KillipVisan2010,KillipVisan2013,Dodson2019}.

\begin{proposition}[$L^p$ breach]\label{prop:Lp-breach}
Let $u$ be a global almost periodic critical element for the three-dimensional energy-critical NLS, satisfying
\begin{equation*}
  \{N(t)^{-1/2}u(t,x(t)+\cdot/N(t)):t\ge0\}
  \Subset\dot H^1(\R^3),
\end{equation*}
and
\begin{equation*}
  0<N_-\le N(t)\le N_+<\infty,
  \qquad \sup_{t\ge0}\|\nabla u(t)\|_{L^2}<\infty.
\end{equation*}
If $u$ satisfies the standard no-waste Duhamel structure, then
\begin{equation*}
  \forall p\in(4,6),
  \qquad \|u\|_{L_t^\infty L_x^p([0,\infty)\times\R^3)}<\infty.
\end{equation*}
\end{proposition}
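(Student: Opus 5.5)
Following Killip--Visan \cite{KillipVisan2013}, I would prove the $L^p$ bound through a double Duhamel argument in frequency-localized form. Fix a Littlewood--Paley projection $P_M$ with $M\le 1$ a dyadic low frequency. The soliton-like bound $N(t)\ge N_-$ and compactness make the high frequencies harmless: $\|P_{>1}u(t)\|_{L^p}\lesssim\|\nabla u(t)\|_{L^2}$ for $p\le6$ by Bernstein and Sobolev. The whole problem is therefore the low-frequency part. The target is a bound
\[
  \|P_M u(t)\|_{L^4_x}\lesssim M^{\alpha}
\]
for some $\alpha>0$, uniformly in $t$. Interpolating this against the Sobolev bound $\|P_Mu\|_{L^6}\lesssim 1$ and summing over $M\le1$ then gives a finite $L^p$ norm for every $p\in(4,6)$. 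This explains why the endpoint $p=4$ itself is not reached.

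The key steps are as follows.

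\textbf{Step 1.} Use the forward no-waste formula and the $L^\infty_x$ dispersive estimate $\|e^{i\tau\Delta}P_M F\|_{L^\infty}\lesssim|\tau|^{-3/2}\|F\|_{L^1}$ to bound
\[
  \|P_M u(t)\|_{L^\infty}
\]
on the long-time part $s-t\ge\tau_0$. For the short-time part $s-t\le\tau_0$, use Bernstein, $\|P_M F\|_{L^\infty}\lesssim M^{3}\|F\|_{L^1}$, or the $L^{6/5}\to L^6$ pairing. Here $|u|^4u\in L^\infty_tL^{6/5}_x$ by Sobolev, with a uniform bound. Optimizing $\tau_0$ in $M$ gives a first weak estimate, such as $\|P_M u\|_{L^\infty}\lesssim M^{1/2}$ (essentially Bernstein), which is not yet useful.

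\textbf{Step 2.} Upgrade this by a bootstrap on
\[
  \mathcal A(M):=\sup_t\|P_M u(t)\|_{L^p}.
\]
Write $|u|^4u=P_{\gtrsim M}\text{-dominated}+\text{low}$. The low-frequency contributions to the nonlinearity are estimated using $\mathcal A(M')$ for $M'\le M$. The high-frequency contributions gain via Bernstein at frequency $M$. This yields a recursive inequality of the form
\[
  \mathcal A(M)\lesssim M^{a}+\sum_{M'\le M}(M'/M)^{b}\mathcal A(M')\,\eta(M')
\]
with $\eta$ small by compactness at low frequency. The smallness holds because, for a compact orbit with $N(t)\ge N_-$, $\|P_{\le \eta'}u\|_{\dot H^1}\to0$ uniformly. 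An acausal Gronwall lemma then gives $\mathcal A(M)\lesssim M^{\alpha}$.

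\textbf{Step 3.} Interpolate and sum dyadically as described above.

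The main obstacle is Step 2 in dimension three. The dispersive decay $|\tau|^{-3/2}$ is only barely integrable, and $|u|^4u$ is only known in $L^{6/5}$, not $L^1$. I would first try the Killip--Visan device of estimating in $L^p$ with $p$ slightly above $4$: use the $L^{p'}\to L^p$ dispersive bound, whose decay $|\tau|^{-3(1/2-1/p)}$ is integrable at infinity exactly when $p>4$, together with H\"older to place $|u|^4u$ in $L^{p'}$ via the $L^6$ and $L^p$ norms. This is precisely where $p>4$ enters and why the argument stops short of the $L^4$ endpoint. Only the forward Duhamel formula is needed for this one-sided $L^\infty_tL^p_x$ statement.
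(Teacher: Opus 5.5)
Your key step rests on a miscalculation. The $L^{p'}\to L^p$ dispersive decay $|\tau|^{-3(\frac12-\frac1p)}$ is integrable at infinity if and only if $3(\frac12-\frac1p)>1$, i.e.\ $p>6$, not $p>4$. For every $p\in(4,6)$ the exponent lies in $(\frac34,1)$, so the long-time part $\int_{\tau_0}^\infty\|e^{i\tau\Delta}P_M(|u|^4u)(t+\tau)\|_{L^p}\,d\tau$ of the forward Duhamel integral cannot be controlled this way. Even the borderline case $p=6$ ($L^{6/5}\to L^6$, rate $|\tau|^{-1}$) diverges logarithmically. An $L^{r'}\to L^r$ estimate with $r>6$ (e.g.\ $L^1\to L^\infty$) does restore integrability. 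But then $P_M(|u|^4u)$ must lie in $L^{r'}$ with $r'<\frac65$, and since Bernstein only raises Lebesgue exponents, this needs $u\in L^{5a}$ for some $a$ with $5a<6$. That is exactly the subcritical integrability you are trying to prove. So the recursive inequality of Step~2, which you assert rather than derive, has no working mechanism behind it, and the claim that this is ``precisely where $p>4$ enters'' is unfounded. The paper avoids one-sided $L^p$ estimates entirely and works at the $L^2$ level. It pairs the forward \emph{and} backward no-waste representations (a double Duhamel argument) to obtain a recurrence for $M^{2s}\|P_Mu(t)\|_{L^2}^2$. Summing over dyadic $M\le1$ gives $\sup_t\|u(t)\|_{\dot H^s}<\infty$ for $\frac34<s<1$, and then $\dot H^{s_p}\hookrightarrow L^p$ with $s_p=\frac32-\frac3p$ finishes the proof. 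The endpoint is lost because $p=4$ corresponds to $s=\frac34$. Your remark that only the forward formula is needed therefore also departs from the two-sided structure the paper's argument uses.

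Your target bound is also inconsistent with your own conclusion. If $\|P_Mu(t)\|_{L^4}\lesssim M^{\alpha}$ with $\alpha>0$, the dyadic sum over $M\le1$ already converges in $L^4$. Together with $\|P_{>1}u\|_{L^4}\lesssim\|\nabla u\|_{L^2}$, you would get $u\in L_t^\infty L_x^4$, the endpoint you say is not reached. The scale the paper actually reaches is $\|P_Mu\|_{L^2}\lesssim_s M^{-s}$ for each $s>\frac34$. This gives only $\|P_Mu\|_{L^4}\lesssim_\varepsilon M^{-\varepsilon}$ for every $\varepsilon>0$. From a bound of that type, interpolating against $\|P_Mu\|_{L^6}\lesssim1$ does not yield a summable series. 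One must instead use Bernstein, $\|P_Mu\|_{L^p}\lesssim M^{3(\frac14-\frac1p)}\|P_Mu\|_{L^4}$, or the Sobolev embedding from $\dot H^{s_p}$. Step~1 gives nothing beyond Bernstein applied directly to $P_Mu$, so the whole proof rests on the Step~2 bootstrap, which as it stands does not close.
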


\begin{proof}
The proof is the standard Killip--Visan $L^p$-breach mechanism, written here in the form needed for the present soliton-like channel. Here $C_c^\infty(\R^3)$ denotes the space of smooth compactly supported functions. Fix a radial function $\chi\in C_c^\infty(\R^3)$ satisfying $0\le\chi\le1$, $\chi(\xi)=1$ for $|\xi|\le1$, and $\chi(\xi)=0$ for $|\xi|\ge2$. For each dyadic frequency $M\in2^{\mathbb Z}:=\{2^k:k\in\mathbb Z\}$, define the smooth Littlewood--Paley projections by
\begin{equation*}
  \widehat{P_{\le M}f}(\xi):=\chi(\xi/M)\widehat f(\xi),
  \qquad
  P_M:=P_{\le M}-P_{\le M/2},
  \qquad
  P_{>M}f:=f-P_{\le M}f.
\end{equation*}
All frequency sums below are taken over dyadic $M,K\in2^{\mathbb Z}$; the symbol $K$ inside such sums denotes a frequency and is unrelated to the Pohozaev functional $K(f)$. When no time interval is displayed in a mixed norm in this proof, the time variable ranges over $[0,\infty)$. Put $F(u):=|u|^4u$. The no-waste Duhamel formula gives, weakly in $\dot H^1$,
\begin{align*}
  u(t)&=-i\lim_{T\to\infty}\int_t^T e^{i(t-s)\Delta}P_{\le M}F(u(s))\,ds \\
      &\quad -i\lim_{T\to\infty}\int_t^T e^{i(t-s)\Delta}P_{>M}F(u(s))\,ds.
\end{align*}
The almost periodicity and the soliton-like bound on $N(t)$ imply uniform frequency tightness in the critical norm:
\begin{equation*}
  \lim_{M\downarrow0}\sup_{t\ge0}\|P_{\le M}u(t)\|_{\dot H^1}=0,
  \qquad
  \lim_{M\uparrow\infty}\sup_{t\ge0}\|P_{>M}u(t)\|_{\dot H^1}=0.
\end{equation*}
Equivalently, after normalizing the bounded scale to unit size, the critical orbit is contained in a compact subset of $\dot H^1$ and has neither an ultraviolet tail nor an arbitrarily large infrared tail in the $\dot H^1$ topology.

The frequency-localized reduced-Duhamel estimate of Killip--Visan applies to this situation. In dimension three it yields, for every
\begin{equation*}
  \frac34<s<1,
\end{equation*}
the low-frequency bound
\begin{equation*}
  \sup_{t\ge0}\sum_{0<M\le1} M^{2s}\|P_Mu(t)\|_{L^2}^2<\infty.
\end{equation*}
For completeness, recall the algebraic content of this estimate. Pairing the forward and backward reduced-Duhamel representations for $P_Mu(t)$ and using the dispersive bound
\begin{equation*}
  \|e^{i\tau\Delta}f\|_{L^6(\R^3)}
  \lesssim |\tau|^{-1}\|f\|_{L^{6/5}(\R^3)},
  \qquad \tau\in\R\setminus\{0\},
\end{equation*}
produces a double-Duhamel inequality of the form
\begin{equation*}
  M^{2s}\|P_Mu(t)\|_{L^2}^2
  \lesssim c_M
  +\eta\sum_{K\le M}K^{2s}\|P_Ku\|_{L_t^\infty L_x^2}^2
  +C_\eta M^{2(1-s)}\|u\|_{L_t^\infty\dot H_x^1}^{10},
\end{equation*}
where $\eta>0$ is arbitrary, $C_\eta>0$ depends only on $\eta$, $c_M\ge0$, and
\begin{equation*}
  \sum_{0<M\le1}c_M<\infty.
\end{equation*}
The term $c_M$ is generated by the compact frequency tail, the middle term comes from interactions in which at least one factor has frequency $\le M$, and the last term is controlled by the conserved critical norm. Choosing $\eta$ sufficiently small and summing over dyadic $0<M\le1$ gives the stated low-frequency estimate. This is precisely the three-dimensional $L^p$ breach estimate in the range $s>3/4$; see \cite{KillipVisan2010,KillipVisan2013} for the full frequency-envelope derivation of the displayed recurrence.

The high-frequency part is easier and uses only the assumed $\dot H^1$ bound. Since $s<1$,
\begin{align*}
  \sup_{t\ge0}\sum_{M>1}M^{2s}\|P_Mu(t)\|_{L^2}^2
  &\le \sup_{t\ge0}\sum_{M>1}M^{2}\|P_Mu(t)\|_{L^2}^2  \\
  &\lesssim \sup_{t\ge0}\|\nabla u(t)\|_{L^2}^2
  <\infty.
\end{align*}
Combining the low- and high-frequency bounds gives
\begin{equation*}
  \sup_{t\ge0}\|u(t)\|_{\dot H^s}<\infty,
  \qquad \frac34<s<1.
\end{equation*}

Now fix $p\in(4,6)$ and set
\begin{equation*}
  s_p:=\frac32-\frac3p.
\end{equation*}
Then
\begin{equation*}
  \frac34<s_p<1,
  \qquad
  \frac1p=\frac12-\frac{s_p}{3}.
\end{equation*}
The homogeneous Sobolev embedding $\dot H^{s_p}(\R^3)\hookrightarrow L^p(\R^3)$ therefore gives
\begin{equation*}
  \|u(t)\|_{L^p}
  \lesssim_p \|u(t)\|_{\dot H^{s_p}}
\end{equation*}
for every $t\ge0$. Taking the supremum in time yields
\begin{equation*}
  \|u\|_{L_t^\infty L_x^p([0,\infty)\times\R^3)}<\infty.
\end{equation*}
This proves the proposition.
\end{proof}

\medskip
\noindent\textbf{Endpoint issue.} Proposition \ref{prop:Lp-breach} gives $L_t^\infty L_x^p$ for every $4<p<6$. The endpoint $p=4$ would correspond to $s=3/4$, where the frequency-envelope summation becomes critical. In low dimensions, the difficult point is to push further to endpoint $L^4$, finite mass, or negative regularity; the virial argument in this paper is designed to separate that issue from the localized virial computation itself.

\medskip
If an additional negative-regularity input is available, one may directly enter the finite-mass layer. Indeed, if a compact critical solution satisfies
\[
  \sup_{t\ge0}\|u(t)\|_{\dot H^{-s}}<\infty
  \qquad\text{for some }s>0,
  \qquad
  \sup_{t\ge0}\|u(t)\|_{\dot H^1}<\infty,
\]
then homogeneous Sobolev interpolation, with $\vartheta=s/(1+s)$ so that $0=(1-\vartheta)(-s)+\vartheta$, gives
\[
  \|u(t)\|_{L^2}
  \lesssim \|u(t)\|_{\dot H^{-s}}^{1-\vartheta}
           \|u(t)\|_{\dot H^1}^{\vartheta}.
\]
Thus $\sup_{t\ge0}\|u(t)\|_{L^2}<\infty$. This shows that the $q=2$ layer in the present paper can be triggered either by finite mass or by negative regularity; the three-dimensional difficulty is how to derive such negative regularity automatically from pure $\dot H^1$ critical compactness and the no-waste Duhamel structure.

\section{Fixed-center localized virial}\label{sec:virial}

Let $\mathbb N_0:=\{0,1,2,\ldots\}$. For a multi-index $\alpha=(\alpha_1,\alpha_2,\alpha_3)\in\mathbb N_0^3$, set
\begin{equation*}
  |\alpha|:=\alpha_1+\alpha_2+\alpha_3,
  \qquad
  \partial^\alpha:=\partial_1^{\alpha_1}\partial_2^{\alpha_2}\partial_3^{\alpha_3}.
\end{equation*}
For $j,k\in\{1,2,3\}$, write $\partial_j:=\partial/\partial x_j$, $\partial_{jk}:=\partial_j\partial_k$, and use the summation convention over repeated spatial indices. The symbol $\delta_{jk}$ denotes the Kronecker delta, and $\mathbf 1_E$ denotes the indicator function of a set $E$.

Choose a radial function $a\in C^\infty(\R^3)$ such that
\begin{align*}
  a(y)&=|y|^2, && |y|\le1,\\
  a(y)&=\text{constant}, && |y|\ge2,
\end{align*}
and, for every multi-index $\alpha$ with $|\alpha|\ge2$, there exists a constant $C_\alpha>0$ such that
\begin{equation*}
  |\partial^\alpha a(y)|\le C_\alpha.
\end{equation*}
For $R>0$, set
\begin{equation*}
  a_R(y):=R^2a(y/R).
\end{equation*}
Then
\begin{align}
  a_R(y)&=|y|^2, && |y|\le R,\notag\\
  \nabla a_R(y)&=0, && |y|\ge2R,\notag\\
  |\nabla a_R(y)|&\lesssim R,\label{eq:aR-grad-bound}\\
  |\partial^\alpha a_R(y)|&\lesssim R^{2-|\alpha|}, && |\alpha|\ge2.\label{eq:aR-derivative-bound}
\end{align}
Writing $\Delta^2:=\Delta\circ\Delta$, one has, in particular,
\begin{equation}\label{eq:Delta2-aR-bound}
  |\Delta^2a_R(y)|\lesssim R^{-2}\mathbf 1_{R\le|y|\le2R}.
\end{equation}
For later use, for $f\in\dot H^1(\R^3)$, $R>0$, and $z\in\R^3$, define the tail-error functional
\begin{equation*}
  \mathcal E_R[f;z]
  :=\int_{|x-z|\ge R}(|\nabla f(x)|^2+|f(x)|^6)\,dx
  +R^{-2}\int_{R\le|x-z|\le2R}|f(x)|^2\,dx.
\end{equation*}
A schematic normalized radial profile of this cutoff is shown in Figure~\ref{fig:virial-cutoff}. The exact formula is irrelevant; only the quadratic core, the compactly supported transition annulus, and the derivative bounds above enter the estimates.

\begin{figure}[t]
\centering
\begin{tikzpicture}[x=2.45cm,y=1.05cm, line cap=round, line join=round, >=Latex]
  \path[use as bounding box] (-.18,-.45) rectangle (3.38,3.55);
  \fill[gray!18] (1,0) rectangle (2,3.35);
  \draw[->,thick] (0,0) -- (3.18,0) node[below left=3pt] {$r=|x-z|$};
  \draw[->,thick] (0,0) -- (0,3.35) node[left=3pt] {cutoff profile};
  \draw[thin] (1,0) -- (1,3.28);
  \draw[thin] (2,0) -- (2,3.28);
  \node[below] at (1,0) {$R$};
  \node[below] at (2,0) {$2R$};
  \node[fill=white,inner sep=2pt] at (1.50,2.85) {$R\le r\le2R$};
  \draw[very thick]
    (0,0) .. controls (.18,.04) and (.65,.40) .. (1,1)
    .. controls (1.25,1.55) and (1.55,2.55) .. (2,3)
    -- (3.0,3);
  \draw[thick,dashed,gray!70]
    (0,0) .. controls (.35,.35) and (.70,.70) .. (1,1)
    .. controls (1.25,1.35) and (1.65,1.45) .. (2,0)
    -- (3.0,0);
  \node[fill=white,inner sep=2pt,anchor=west] at (.20,.52) {$a_R(r)=r^2$};
  \node[fill=white,inner sep=2pt,anchor=west] at (2.18,3.08) {constant};
  \node[fill=white,inner sep=2pt,anchor=west] at (2.18,2.45) {$a_R$};
  \node[fill=white,inner sep=2pt,anchor=west] at (2.18,1.40) {scaled $|\nabla a_R|$};
\end{tikzpicture}
\caption{A schematic radial virial cutoff. The region $r\le R$ gives the main term $8K(u)$, while the annulus $R\le r\le2R$ supports the truncation errors controlled by $\mathcal E_R$.}
\label{fig:virial-cutoff}
\end{figure}

Once these cutoff functions have been fixed, with observation center $z=x(0)$ define the virial action
\begin{equation}\label{eq:virial-action}
  \Vc_R(t):=2\operatorname{Im}\int_{\R^3}
  \nabla a_R(x-z)\cdot\nabla u(t,x)\overline{u(t,x)}\,dx.
\end{equation}

The quantity \eqref{eq:virial-action} is meaningful for $\dot H^1$ solutions. Indeed, $\nabla a_R$ is supported in $B(z,2R)$ and $|\nabla a_R|\lesssim R$, while
\begin{equation*}
  \|u(t)\|_{L^2(B(z,2R))}\lesssim R\|\nabla u(t)\|_{L^2}
\end{equation*}
follows from Sobolev embedding. Hence
\begin{equation*}
  |\mathcal V_R(t)|
  \lesssim R\|\nabla u(t)\|_{L^2}\|u(t)\|_{L^2(B(z,2R))}<\infty.
\end{equation*}

To pass the virial computation from smooth solutions to general energy solutions, we first record the continuity of the truncated action under smoothing.

\begin{lemma}[Action continuity]\label{lem:virial-approximation}
Assume $f_n\to f$ in $\dot H^1(\R^3)$, and fix $R>0$ and $z\in\R^3$. Then
\[
  \int \nabla a_R(x-z)\cdot\nabla f_n\,\overline{f_n}\,dx
  \to
  \int \nabla a_R(x-z)\cdot\nabla f\,\overline f\,dx.
\]
Moreover, if smooth solutions $u_n$ converge to an energy solution $u$ on compact time intervals, then the localized virial identity passes to $u$.
\end{lemma}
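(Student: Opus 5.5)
The first claim follows from a bilinear estimate. The second is obtained by integrating the virial identity in time and passing to the limit term by term.

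\emph{Continuity of the action.} Set $\psi:=\nabla a_R(\cdot-z)$. By \eqref{eq:aR-grad-bound}, $\psi$ is bounded by $CR$ and supported in $B(z,2R)$. For any $g,h\in\dot H^1$, H\"older's inequality and the local estimate \eqref{eq:local-L2-basic} give
\[
  \Bigl|\int\psi\cdot\nabla g\,\overline h\,dx\Bigr|
  \lesssim R\|\nabla g\|_{L^2}\|h\|_{L^2(B(z,2R))}
  \lesssim R^2\|\nabla g\|_{L^2}\|\nabla h\|_{L^2}.
\]
Write
\[
  \nabla f_n\,\overline{f_n}-\nabla f\,\overline f
  =\nabla(f_n-f)\,\overline{f_n}+\nabla f\,\overline{(f_n-f)}.
\]
Applying the bilinear bound to each term, and using that $\|f_n\|_{\dot H^1}$ is bounded, the difference of the two integrals is at most $CR^2\|f_n-f\|_{\dot H^1}\bigl(\|f_n\|_{\dot H^1}+\|f\|_{\dot H^1}\bigr)$. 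This tends to $0$.

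\emph{Passing the identity to energy solutions.} For smooth solutions $u_n$ the localized virial identity holds in integrated form:
\[
  \mathcal V_R[u_n](t_2)-\mathcal V_R[u_n](t_1)
  =\int_{t_1}^{t_2}G_R[u_n(s)]\,ds.
\]
The right-hand side $G_R[f]$ is a sum of integrals of the following types:
\[
  \int\partial_{jk}a_R\,\partial_jf\,\overline{\partial_kf},\qquad
  \int\Delta^2a_R\,|f|^2,\qquad
  \int\Delta a_R\,|f|^6 .
\]
By \eqref{eq:aR-derivative-bound} and \eqref{eq:Delta2-aR-bound}, each weight is bounded. Moreover, $\Delta^2a_R$ is supported on the annulus $R\le|x-z|\le2R$. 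Hence each term is continuous on $\dot H^1$:
\begin{itemize}
\item the gradient term, by the $L^2$ convergence of gradients;
\item the $|f|^2$ term, by H\"older on the annulus together with \eqref{eq:local-L2-basic};
\item the $|f|^6$ term, by Sobolev embedding and the elementary bound $\bigl||a|^6-|b|^6\bigr|\lesssim|a-b|(|a|^5+|b|^5)$.
\end{itemize}
On a compact interval $J$, assume $u_n\to u$ in $C(J;\dot H^1)$. Then $G_R[u_n(s)]\to G_R[u(s)]$ uniformly in $s\in J$. By the first claim, the endpoint actions converge as well. The integrated identity therefore holds for $u$. Since $s\mapsto G_R[u(s)]$ is continuous, $\mathcal V_R[u]$ is $C^1$ and the differential identity follows.

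The main point needing care is how the approximating smooth solutions are produced. I would take $u_n$ to be the solutions with mollified data $u_n(0)=P_{\le n}u_0$, which are $H^\infty$ at least locally. The stability theory of Section~\ref{sec:notation} then gives convergence in $C(J;\dot H^1)$ on any compact $J$ inside the lifespan of $u$, since $S_J(u)<\infty$ there. This is exactly the hypothesis used above.
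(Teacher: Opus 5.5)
Your proof is correct and follows the paper's own argument. For the action, you use the same splitting $\nabla f_n\,\overline{f_n}-\nabla f\,\overline f=\nabla(f_n-f)\,\overline{f_n}+\nabla f\,\overline{(f_n-f)}$ together with $|\nabla a_R|\lesssim R$ and the local bound $\|h\|_{L^2(B(z,2R))}\lesssim R\|\nabla h\|_{L^2}$. For passing the identity to $u$, you use term-by-term $\dot H^1$-continuity of the virial right-hand side, via $L^6$ convergence and the annular $L^2$ term. The paper only asserts this second part with ``the proof is the same,'' so your integrated-in-time formulation with uniform convergence on compact intervals, and your remark on producing the smooth approximants, simply supply details the paper leaves implicit.
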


\begin{proof}
By $\dot H^1\hookrightarrow L^6$ and local H\"older,
\[
  \|f_n-f\|_{L^2(B(z,2R))}
  \lesssim R\|\nabla(f_n-f)\|_{L^2}\to0.
\]
Thus
\[
\begin{aligned}
  &\left|\int \nabla a_R\cdot\nabla f_n\overline{f_n}
  -\int \nabla a_R\cdot\nabla f\overline f\right| \\
  &\quad\lesssim R\|\nabla(f_n-f)\|_{L^2}\|f_n\|_{L^2(B(z,2R))}
  +R\|\nabla f\|_{L^2}\|f_n-f\|_{L^2(B(z,2R))}\to0.
\end{aligned}
\]
Passing the identity also uses the convergence $f_n\to f$ in $L^6$ and the local continuity of the $L^2$ annular term; the proof is the same.
\end{proof}

With the smoothing approximation available, it is enough to compute first for smooth decaying solutions. This yields the localized virial identity used below.

\begin{lemma}[Localized virial]\label{lem:localized-virial}
If $u$ is sufficiently smooth and decaying, then
\begin{align}
  \frac{d}{dt}\Vc_R(t)
  =&\ 4\int_{\R^3}\partial_{jk}a_R(x-z)
  \operatorname{Re}\bigl(\partial_ju\overline{\partial_ku}\bigr)\,dx\notag\\
  &-\int_{\R^3}\Delta^2a_R(x-z)|u|^2\,dx
  -\frac43\int_{\R^3}\Delta a_R(x-z)|u|^6\,dx.
  \label{eq:localized-virial-identity}
\end{align}
For a general energy solution, \eqref{eq:localized-virial-identity} follows from Lemma \ref{lem:virial-approximation} and a standard smoothing approximation.
\end{lemma}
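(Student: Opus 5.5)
The plan is to differentiate \eqref{eq:virial-action} directly for a smooth, rapidly decaying solution, and then pass to energy solutions with Lemma~\ref{lem:virial-approximation}.

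Write $\phi(x):=a_R(x-z)$. The action is $\mathcal V_R=2\operatorname{Im}\int\partial_j\phi\,\partial_ju\,\bar u\,dx$. Using the equation $\partial_tu=i\Delta u+i|u|^4u$, we get
\[
\frac{d}{dt}\mathcal V_R
=2\operatorname{Im}\int\partial_j\phi\bigl(\partial_j u_t\,\bar u+\partial_ju\,\overline{u_t}\bigr)\,dx .
\]
Integrate by parts once in the first term to move $\partial_j$ off $u_t$. This gives
\[
\frac{d}{dt}\mathcal V_R
=-2\operatorname{Im}\int\Delta\phi\,u_t\bar u\,dx
-4\operatorname{Im}\int\partial_j\phi\,u_t\,\partial_j\bar u\,dx ,
\]
after using $\operatorname{Im}(\bar w)=-\operatorname{Im}(w)$. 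Substitute $u_t$ and split the result into a linear (Laplacian) part and a nonlinear part.

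\emph{Linear part.} For the Laplacian we have
\[
-2\operatorname{Re}\int\Delta\phi\,\Delta u\,\bar u\,dx
-4\operatorname{Re}\int\partial_j\phi\,\Delta u\,\partial_j\bar u\,dx .
\]
In the first integral, integrate by parts once and use $\operatorname{Re}(\bar u\,\partial_k u)=\tfrac12\partial_k|u|^2$. It becomes
\[
2\int\Delta\phi\,|\nabla u|^2\,dx-\int\Delta^2\phi\,|u|^2\,dx .
\]
In the second integral, integrate $\partial_k\partial_k u$ by parts and use $\operatorname{Re}(\partial_ku\,\partial_{jk}\bar u)=\tfrac12\partial_j|\partial_ku|^2$. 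It becomes
\[
4\int\partial_{jk}\phi\operatorname{Re}(\partial_ku\,\partial_j\bar u)\,dx-2\int\Delta\phi\,|\nabla u|^2\,dx .
\]
The two $\Delta\phi|\nabla u|^2$ terms cancel. What remains is the Hessian term and $-\int\Delta^2\phi|u|^2$, which match \eqref{eq:localized-virial-identity}.

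\emph{Nonlinear part.} For $|u|^4u$, the first integral contributes $-2\int\Delta\phi|u|^6$. In the second, use $\operatorname{Re}(|u|^4u\,\partial_j\bar u)=\tfrac16\partial_j|u|^6$ and integrate by parts; this gives $+\tfrac{4}{6}\int\Delta\phi|u|^6$. The total is $(-2+\tfrac23)\int\Delta\phi|u|^6=-\tfrac43\int\Delta\phi|u|^6$, as claimed.

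\emph{Boundary terms.} All boundary terms vanish because $\nabla\phi$ is compactly supported in $B(z,2R)$. Every integral we produced involves at least one derivative of $\phi$, so only local integrability of $u$ is needed.

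For a general energy solution, approximate $u_0$ in $\dot H^1$ by Schwartz data. By the stability theory, the corresponding solutions converge in $C_t\dot H^1_x$ on compact time intervals. Now integrate the identity in time:
\[
\mathcal V_R(t_2)-\mathcal V_R(t_1)=\int_{t_1}^{t_2}(\text{right-hand side})\,dt .
\]
Pass to the limit in each term:
\begin{itemize}
\item the left side by Lemma~\ref{lem:virial-approximation};
\item the Hessian term using $|\partial_{jk}\phi|\lesssim 1$ and $\dot H^1$ convergence;
\item the $|u|^6$ term by $L^6$ convergence;
\item the $\Delta^2\phi$ term by local $L^2$ convergence on the annulus, which follows from \eqref{eq:local-L2-basic}.
\end{itemize}
The right-hand side is then continuous in $t$, so the integrated identity gives the differential form.

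I expect the sign and constant bookkeeping in the integrations by parts to be the main obstacle, in particular the cancellation of the $\Delta\phi|\nabla u|^2$ terms and the factor $4/3$. It should be routine once it is organized as above.
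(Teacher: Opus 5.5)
Your proposal is correct and follows essentially the same route as the paper. You substitute $\partial_t u=i\Delta u+i|u|^4u$, treat the Laplacian part by two integrations by parts (including the $\Delta\phi\,|\nabla u|^2$ cancellation), and treat the nonlinear part via $\operatorname{Re}(|u|^4u\,\partial_j\bar u)=\tfrac16\partial_j|u|^6$ to get $-\tfrac43$. Your approximation step (integrate in time, pass to the limit termwise using stability theory, then recover the derivative from continuity of the right-hand side) is a more explicit version of the paper's appeal to Lemma~\ref{lem:virial-approximation} and standard smoothing.
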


\begin{proof}
Substitute
\begin{equation*}
  \partial_tu=i\Delta u+i|u|^4u
\end{equation*}
into \eqref{eq:virial-action}. To make the sign convention explicit, write the time derivative as two parts:
\[
\begin{aligned}
  \frac{d}{dt}\mathcal V_R(t)
  &=2\operatorname{Im}\int \partial_j a_R\,\partial_j(\partial_tu)\,\overline u\,dx
    +2\operatorname{Im}\int \partial_j a_R\,\partial_j u\,\overline{\partial_tu}\,dx \\
  &=\mathcal L_R(t)+\mathcal N_R(t),
\end{aligned}
\]
where $\mathcal L_R$ collects the terms produced by $i\Delta u$, and $\mathcal N_R$ collects the terms produced by $i|u|^4u$. For the linear part, two integrations by parts and the symmetry $\partial_{jk}a_R=\partial_{kj}a_R$ give
\[
  \mathcal L_R(t)
  =4\int \partial_{jk}a_R\operatorname{Re}(\partial_j u\overline{\partial_k u})\,dx
   -\int \Delta^2a_R |u|^2\,dx.
\]
This step uses only that $a_R$ is real-valued, not the nonlinear structure. For the nonlinear term, use
\begin{equation*}
  \operatorname{Re}(|u|^4\overline u\,\nabla u)=\frac16\nabla(|u|^6),
\end{equation*}
which gives
\begin{equation*}
  -\frac43\int\Delta a_R|u|^6\,dx.
\end{equation*}
Combining the terms proves the identity.
\end{proof}

When $|x-z|\le R$,
\begin{equation}\label{eq:quadratic-derivatives}
  \partial_{jk}a_R=2\delta_{jk},
  \qquad \Delta a_R=6,
  \qquad \Delta^2a_R=0.
\end{equation}
Thus the main contribution is
\begin{equation*}
  8\int|\nabla u|^2\,dx-8\int|u|^6\,dx=8K(u).
\end{equation*}
We now write the difference between the localized virial derivative and the main term $8K$ as a unified tail error.

\begin{lemma}[Virial error]\label{lem:virial-error}
There exists a constant $C_a$, depending only on $a$, such that for every $f\in\dot H^1(\R^3)$, every $R>0$, and every $z\in\R^3$,
\begin{align*}
  &\left|
  4\int \partial_{jk}a_R(x-z)\operatorname{Re}(\partial_jf\overline{\partial_kf})\,dx
  -\int \Delta^2a_R(x-z)|f|^2\,dx \right.\\
  &\left.\hspace{2.2cm}
  -\frac43\int \Delta a_R(x-z)|f|^6\,dx
  -8K(f)
  \right|
  \le C_a\mathcal E_R[f;z],
\end{align*}
where
\begin{equation}\label{eq:error-functional}
  \mathcal E_R[f;z]
  :=\int_{|x-z|\ge R}(|\nabla f(x)|^2+|f(x)|^6)\,dx
  +R^{-2}\int_{R\le|x-z|\le2R}|f(x)|^2\,dx.
\end{equation}
\end{lemma}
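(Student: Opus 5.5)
The idea is to split $\R^3$ into the core ball $\{|x-z|\le R\}$ and its complement. On the core the cutoff is exactly quadratic, so the integrand reproduces the integrand of $8K(f)$; everything else is a tail term.

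Concretely, using \eqref{eq:quadratic-derivatives}, on $\{|x-z|\le R\}$ we have $4\partial_{jk}a_R\operatorname{Re}(\partial_jf\overline{\partial_kf})=8|\nabla f|^2$, $\Delta^2a_R=0$ and $\frac43\Delta a_R|f|^6=8|f|^6$. Hence the integrand of the localized expression minus the integrand of $8K(f)=8\int(|\nabla f|^2-|f|^6)$ vanishes identically on the core. The difference in the statement therefore equals
\[
  \int_{|x-z|>R}\Bigl(4\partial_{jk}a_R\operatorname{Re}(\partial_jf\overline{\partial_kf})-8|\nabla f|^2\Bigr)dx
  -\int_{|x-z|>R}\Delta^2a_R|f|^2\,dx
  -\int_{|x-z|>R}\Bigl(\tfrac43\Delta a_R-8\Bigr)|f|^6\,dx ,
\]
with $a_R$ evaluated at $x-z$.

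Each piece is then bounded separately.
\begin{itemize}
\item By \eqref{eq:aR-derivative-bound} with $|\alpha|=2$, the Hessian and Laplacian of $a_R$ are bounded uniformly in $R$ by constants depending only on $a$. So the first integrand is at most $C|\nabla f|^2$ and the third at most $C|f|^6$ on $\{|x-z|\ge R\}$.
\item By \eqref{eq:Delta2-aR-bound}, the middle term is bounded by $CR^{-2}\int_{R\le|x-z|\le2R}|f|^2$.
\item Summing gives the bound $C_a\mathcal E_R[f;z]$, with $C_a$ depending only on the constants $C_\alpha$, $|\alpha|\le4$, of $a$.
\end{itemize}
Note that no cancellation beyond the core is needed: outside $2R$ the localized terms vanish, but the $-8K$ contributions $-8|\nabla f|^2+8|f|^6$ remain and are absorbed directly by $\mathcal E_R$.

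The only point requiring care is that all integrals are finite for a general $f\in\dot H^1$. The gradient and $L^6$ terms are finite since $f\in\dot H^1\subset L^6$. The annular $L^2$ integral is finite by \eqref{eq:local-L2-basic}. So the splitting is legitimate without any density argument, and I expect no real obstacle.
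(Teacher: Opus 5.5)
Your proposal is correct and follows the paper's proof essentially verbatim. As in the paper, you use \eqref{eq:quadratic-derivatives} to see that the localized integrand coincides with that of $8K(f)$ on the core ball, and you bound the three remainders (the paper's $\Err_\nabla$, $\Err_6$, $\Err_2$) on $\{|x-z|\ge R\}$ via \eqref{eq:aR-derivative-bound} and \eqref{eq:Delta2-aR-bound}. Your extra remarks on the finiteness of all integrals for general $f\in\dot H^1$, and on the $-8K$ contributions that survive outside $2R$, are accurate and simply make explicit what the paper leaves implicit.
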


\begin{proof}
Inside $|x-z|\le R$, use \eqref{eq:quadratic-derivatives}; the contribution is exactly $8K(f)$. All differences are supported in $\{|x-z|\ge R\}$. In this proof, define
\begin{align*}
  \Err_\nabla
  &:=4\int\bigl(\partial_{jk}a_R(x-z)-2\delta_{jk}\bigr)
       \operatorname{Re}(\partial_jf\overline{\partial_kf})\,dx,\\
  \Err_6
  &:=-\frac43\int\bigl(\Delta a_R(x-z)-6\bigr)|f|^6\,dx,\\
  \Err_2
  &:=-\int\Delta^2a_R(x-z)|f|^2\,dx.
\end{align*}
Then the expression on the left-hand side of the lemma is $|\Err_\nabla+\Err_6+\Err_2|$. By \eqref{eq:aR-derivative-bound}, the second-derivative term and the nonlinear term satisfy
\begin{equation*}
  |\Err_\nabla+\Err_6|
  \lesssim \int_{|x-z|\ge R}(|\nabla f|^2+|f|^6)\,dx.
\end{equation*}
By \eqref{eq:Delta2-aR-bound},
\begin{equation*}
  |\Err_2|
  \lesssim R^{-2}\int_{R\le|x-z|\le2R}|f|^2\,dx.
\end{equation*}
Combining the three terms gives the result.
\end{proof}

Define the total virial remainder $\operatorname{Err}_R(t;z)$ by
\begin{equation*}
  \frac{d}{dt}\mathcal V_R(t)=8K(u(t))+\operatorname{Err}_R(t;z).
\end{equation*}
By Lemma \ref{lem:virial-error}, the error satisfies
\begin{equation*}
  |\operatorname{Err}_R(t;z)|\le C_a\mathcal E_R[u(t);z].
\end{equation*}
This decomposition is the main axis of the rigidity proof: below-threshold coercivity gives $8K(u(t))\ge8\kappa_0$, while critical compactness and drift geometry allow one to impose $\mathcal E_{R_T}[u(t);z]\le\varepsilon_0\kappa_0$ for any prescribed $\varepsilon_0>0$. The next lemma converts this tail smallness into a uniform error control on each time window.

\begin{lemma}[Tail-error control]\label{lem:tail-error-control}
Assume that $u,N,x$ satisfy \eqref{eq:compactness-v} and \eqref{eq:N-bounded}. Given $\eps>0$, there exists $A=A(\eps)$ such that, for all $T\ge1$ and all $0\le t\le T$, if
\begin{equation*}
  R_T:=D_x(T)+A,
\end{equation*}
then
\begin{equation}\label{eq:ET-small}
  \mathcal E_{R_T}[u(t);z]\le\eps.
\end{equation}
\end{lemma}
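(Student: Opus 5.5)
The estimate \eqref{eq:ET-small} follows by splitting $\mathcal E_{R_T}[u(t);z]$ into its two parts and controlling both by the tail modulus $\omega(A)$ from Lemma \ref{lem:compact-tail}, using the geometric implication \eqref{eq:geometry-tail}. Fix $\eps>0$ and choose $A=A(\eps)$ with $\omega(A)\le\eps'$, where $\eps'$ is a small multiple of $\eps$ (or of $\eps^3$, see below) to be fixed at the end. This is possible because $\omega(A)\to0$. The choice depends only on the compact orbit and on $N_\pm$, not on $T$.

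\emph{Exterior energy part.} For $0\le t\le T$ and $|x-z|\ge R_T$, \eqref{eq:geometry-tail} gives $|x-x(t)|\ge A$. Hence
\[
  \int_{|x-z|\ge R_T}\bigl(|\nabla u(t)|^2+|u(t)|^6\bigr)\,dx
  \le\int_{|x-x(t)|\ge A}\bigl(|\nabla u(t)|^2+|u(t)|^6\bigr)\,dx\le\omega(A).
\]

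\emph{Annular mass part.} The annulus $\Omega_{R_T}=\{R_T\le|x-z|\le2R_T\}$ lies in $\{|x-z|\ge R_T\}$, so by \eqref{eq:geometry-tail} it also lies in $\{|x-x(t)|\ge A\}$. Apply the H\"older computation of Section \ref{sec:compactness}. Since $|\Omega_{R_T}|^{2/3}\simeq R_T^2$, the prefactor $R_T^{-2}$ cancels exactly, and
\[
  R_T^{-2}\int_{\Omega_{R_T}}|u(t)|^2\,dx\lesssim\Bigl(\int_{|x-x(t)|\ge A}|u(t)|^6\,dx\Bigr)^{1/3}\le\omega(A)^{1/3}.
\]
The implicit constant is absolute, so the bound is uniform in $R_T$, $T$ and $t$.

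Adding the two parts gives $\mathcal E_{R_T}[u(t);z]\le\omega(A)+C\omega(A)^{1/3}$. Choosing $A$ so large that $\omega(A)\le\min\{\eps/2,(\eps/2C)^3\}$ yields \eqref{eq:ET-small} for all $T\ge1$ and all $0\le t\le T$.

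The only delicate point is that the annular $L^2$ term carries no smallness from finite mass. It is controlled only through the scale-invariant H\"older bound, which produces the cube root $\omega(A)^{1/3}$. This explains why $A$ must be chosen against $\eps^3$ rather than $\eps$. The drift plays no role beyond the inclusion \eqref{eq:geometry-tail}.
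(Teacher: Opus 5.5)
Your proof is correct and follows the paper's argument: the inclusion \eqref{eq:geometry-tail} bounds the exterior gradient and $L^6$ tails by the compactness tail at radius $A$. H\"older on the annulus, where $|\Omega_{R_T}|^{2/3}\simeq R_T^2$ cancels $R_T^{-2}$, bounds the mass term by the cube root of the $L^6$ tail, so $A$ is chosen against $\eps^3$. Your explicit choice $\omega(A)\le\min\{\eps/2,(\eps/2C)^3\}$ is slightly cleaner than the paper's ``replace $\eps$ by $c\eps^3$'', which tacitly takes $\eps\lesssim1$ (harmless, but yours covers all $\eps$ directly).
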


\begin{proof}
By Lemma \ref{lem:compact-tail}, choose $A$ sufficiently large so that
\begin{equation}\label{eq:tail-small-A}
  \sup_{s\ge0}\int_{|x-x(s)|\ge A}
  (|\nabla u(s,x)|^2+|u(s,x)|^6)\,dx\le\eps.
\end{equation}
For $0\le t\le T$, \eqref{eq:geometry-tail} gives
\begin{equation}\label{eq:error-tail-inclusion}
  \{|x-z|\ge R_T\}\subset\{|x-x(t)|\ge A\}.
\end{equation}
Hence the gradient and $L^6$ tails in \eqref{eq:error-functional} are at most $\eps$.

It remains to control the $L^2$ annular term by H\"older's inequality. Let
\begin{equation*}
  \Omega_T:=\{x:R_T\le|x-z|\le2R_T\}.
\end{equation*}
By \eqref{eq:error-tail-inclusion}, $\Omega_T\subset\{|x-x(t)|\ge A\}$. Therefore
\begin{align*}
  R_T^{-2}\int_{\Omega_T}|u(t,x)|^2\,dx
  &\le R_T^{-2}|\Omega_T|^{2/3}
  \left(\int_{\Omega_T}|u(t,x)|^6\,dx\right)^{1/3}\\
  &\lesssim
  \left(\int_{|x-x(t)|\ge A}|u(t,x)|^6\,dx\right)^{1/3}.
\end{align*}
Replacing $\eps$ in \eqref{eq:tail-small-A} beforehand by $c\eps^3$, where $c>0$ is a sufficiently small absolute constant, gives \eqref{eq:ET-small}.
\end{proof}

We now combine the positive below-threshold gap with the tail-error control to obtain a linear lower bound for the fixed-center virial action.

\begin{proposition}[Virial derivative]\label{prop:virial-derivative-lower}
Let $u$ be a global strong solution satisfying
\begin{equation*}
  E(u_0)<E(W),
  \qquad \|\nabla u_0\|_{L^2}<\|\nabla W\|_{L^2},
\end{equation*}
and assume that there exist $N(t)$ and $x(t)$ such that \eqref{eq:compactness-v} and \eqref{eq:N-bounded} hold. If $u\not\equiv0$, then there exists $A_0>0$ such that, for every $T\ge1$, with
\begin{equation*}
  R_T:=D_x(T)+A_0,
\end{equation*}
one has
\begin{equation*}
  \frac{d}{dt}\Vc_{R_T}(t)\ge4\kappa_0,
  \qquad 0\le t\le T,
\end{equation*}
where $\kappa_0$ is given by \eqref{eq:K-kappa0}.
\end{proposition}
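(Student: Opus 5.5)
The plan is to combine the decomposition $\frac{d}{dt}\Vc_R(t)=8K(u(t))+\Err_R(t;z)$, with $|\Err_R(t;z)|\le C_a\mathcal E_R[u(t);z]$, with the positive virial gap and the tail-error control. First, since $u\not\equiv0$ and $u$ is below threshold, Lemma \ref{lem:positive-K} gives $\kappa_0=\kappa_0(u_0)>0$ with $K(u(t))\ge\kappa_0$ for all $t\ge0$. Hence the main term satisfies $8K(u(t))\ge8\kappa_0$ uniformly in time.

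Next I will choose the tail radius once and for all, independently of $T$. I apply Lemma \ref{lem:tail-error-control} with $\eps:=4\kappa_0/C_a$, where $C_a$ is the constant of Lemma \ref{lem:virial-error}. This produces $A_0:=A(\eps)$ such that for every $T\ge1$ and every $0\le t\le T$, with $R_T=D_x(T)+A_0$, one has $\mathcal E_{R_T}[u(t);z]\le4\kappa_0/C_a$. The key point is that Lemma \ref{lem:tail-error-control} already accounts for the drift through the geometric inclusion \eqref{eq:geometry-tail}. Its constant is uniform in $T$ because the tail modulus $\omega$ depends only on the compact orbit and on $N_\pm$.

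For fixed $T$, the cutoff $a_{R_T}(\cdot-z)$ is a fixed, time-independent weight on $[0,T]$. I therefore apply the localized virial identity of Lemma \ref{lem:localized-virial}, with $R=R_T$, to the energy solution; this is justified through the smoothing approximation of Lemma \ref{lem:virial-approximation}. Lemma \ref{lem:virial-error} then gives, for $0\le t\le T$,
\[
  \frac{d}{dt}\Vc_{R_T}(t)\ge8K(u(t))-C_a\mathcal E_{R_T}[u(t);z]\ge8\kappa_0-4\kappa_0=4\kappa_0 .
\]

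The only delicate step is the regularity of $t\mapsto\Vc_{R_T}(t)$ for a merely $\dot H^1$ strong solution. To make the derivative statement rigorous, I will first prove it in integrated form on smooth approximants. The right-hand side of \eqref{eq:localized-virial-identity} is continuous in $t$ with values controlled in $\dot H^1$; the $L^2$ annular term is continuous by local Hölder and $L^6$ continuity. Passing to the limit in the integrated identity then gives $\Vc_{R_T}\in C^1([0,T])$, and the pointwise lower bound follows. Everything else is a direct assembly of earlier lemmas.
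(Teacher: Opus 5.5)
Your proposal is correct and is essentially the paper's proof: you combine $8K(u(t))\ge8\kappa_0$ from Lemma~\ref{lem:positive-K} with the error bound of Lemma~\ref{lem:virial-error}. You then choose $A_0$ via Lemma~\ref{lem:tail-error-control} with $\eps=4\kappa_0/C_a$, so that $C_a\mathcal E_{R_T}[u(t);z]\le4\kappa_0$ uniformly in $T\ge1$ and $0\le t\le T$. Your extra paragraph justifying $\Vc_{R_T}\in C^1([0,T])$ through the integrated identity on smooth approximants just spells out what the paper leaves to Lemma~\ref{lem:virial-approximation} and the smoothing remark in Lemma~\ref{lem:localized-virial}.
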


\begin{proof}
By Lemmas \ref{lem:localized-virial} and \ref{lem:virial-error},
\begin{equation}\label{eq:virial-derivative-error}
  \frac{d}{dt}\Vc_{R_T}(t)
  \ge8K(u(t))-C_a\mathcal E_{R_T}[u(t);z].
\end{equation}
By \eqref{eq:K-kappa0}, $8K(u(t))\ge8\kappa_0$. By Lemma \ref{lem:tail-error-control}, choose $A_0$ such that
\begin{equation*}
  C_a\mathcal E_{R_T}[u(t);z]\le4\kappa_0
\end{equation*}
for all $T\ge1$ and $0\le t\le T$. Substitution into \eqref{eq:virial-derivative-error} proves the claim.
\end{proof}

The derivative lower bound gives linear growth of the virial. To reach a contradiction, we also need to estimate the size of the same action at the endpoints of the time window.

\begin{proposition}[Action bound]\label{prop:virial-action-upper}
Assume $\sup_{t\ge0}\|\nabla u(t)\|_{L^2}<\infty$. Then, for every $R>0$ and $t\ge0$,
\begin{equation}\label{eq:virial-action-bound-general}
  |\Vc_R(t)|
  \lesssim_u R\,\|u(t)\|_{L^2(B(z,2R))}.
\end{equation}
If in addition $M_q(u)<\infty$, $2\le q\le6$, and if $0\le t\le T$ and $R=R_T=D_x(T)+A_0$, then
\begin{equation}\label{eq:virial-action-bound-RT}
  |\Vc_{R_T}(t)|
  \lesssim_{u,q,A_0} R_T^{1+\theta(q)}.
\end{equation}
\end{proposition}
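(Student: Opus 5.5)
The first bound \eqref{eq:virial-action-bound-general} will come straight from the definition \eqref{eq:virial-action}. Since $\nabla a_R(\cdot-z)$ is supported in $B(z,2R)$ and satisfies $|\nabla a_R|\lesssim R$ by \eqref{eq:aR-grad-bound}, the integrand lives on $B(z,2R)$. Cauchy--Schwarz on that ball then gives
\[
  |\Vc_R(t)|\le 2\|\nabla a_R\|_{L^\infty}\|\nabla u(t)\|_{L^2}\|u(t)\|_{L^2(B(z,2R))}
  \lesssim R\Bigl(\sup_{s\ge0}\|\nabla u(s)\|_{L^2}\Bigr)\|u(t)\|_{L^2(B(z,2R))}.
\]
The supremum of the gradient norm is finite by assumption, so it can be absorbed into the implicit constant depending on $u$. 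This is all that is needed for the general bound.

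For \eqref{eq:virial-action-bound-RT}, I will bound the local mass factor with $R=R_T$. The simplest route is to apply the Hölder estimate \eqref{eq:local-L2-Lq} directly on the fixed ball $B(z,2R_T)$:
\[
  \|u(t)\|_{L^2(B(z,2R_T))}\le |B(z,2R_T)|^{\frac12-\frac1q}\|u(t)\|_{L^q}\lesssim_q R_T^{\theta(q)}M_q(u).
\]
This gives $|\Vc_{R_T}(t)|\lesssim_{u,q} R_T^{1+\theta(q)}$ for every $t$, and there is no need to know whether $t\le T$. An alternative, more in the spirit of the paper, passes through the moving-center inclusion \eqref{eq:ball-inclusion}, $B(z,2R_T)\subset B(x(t),3R_T)$ for $0\le t\le T$, followed by Lemma~\ref{lem:local-mass-Lq}: $\|u(t)\|_{L^2(B(z,2R_T))}\le\Phi(3R_T)\le C_qM_q(u)(3R_T)^{\theta(q)}$. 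I will mention both routes, since the second matches the way $\Phi$ is used in the drift discussion.

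Neither step contains a real obstacle. The only point to check is that the constants depend only on $u$ (through $\sup_t\|\nabla u(t)\|_{L^2}$ and $M_q(u)$), on $q$, and on the cutoff $a$, and never on $T$; the radius $R_T$ enters only through the explicit power. The dependence on $A_0$ allowed in \eqref{eq:virial-action-bound-RT} is not actually needed. It would only enter if one wanted to replace $R_T$ by $D_x(T)+1$ afterwards: since $A_0>1$, we have $R_T\le A_0(D_x(T)+1)$, which converts the bound into $|\Vc_{R_T}(t)|\lesssim_{u,q,A_0}(D_x(T)+1)^{1+\theta(q)}$. That is the form to be compared with the linear growth $4\kappa_0T$ from Proposition~\ref{prop:virial-derivative-lower} along the sequence in \eqref{eq:rigidity-drift-general}.
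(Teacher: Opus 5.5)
Your proposal is correct. The first bound is proved exactly as in the paper, by Cauchy--Schwarz on $B(z,2R)$ with $|\nabla a_R|\lesssim R$.

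For the second bound, the paper takes what you list as your alternative route. It uses the drift geometry \eqref{eq:ball-inclusion} to place $B(z,2R_T)$ inside $B(x(t),3R_T)$ for $0\le t\le T$. It then invokes $\Phi(3R_T)\lesssim R_T^{\theta(q)}$ from Lemma~\ref{lem:local-mass-Lq}.

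Your primary route, H\"older applied directly on the fixed ball, is shorter and more general under the stated hypothesis. A global $L^q$ bound controls the $L^2$ mass of \emph{every} ball of radius $r$ by $r^{\theta(q)}M_q(u)$, whatever its center, so the ball inclusion is redundant. You therefore get the bound for all $t\ge0$, with a constant that does not depend on $A_0$. Your remark that the $A_0$-dependence is cosmetic is right; it is absent from the paper's actual constant as well.

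The paper's detour through $\Phi$ buys robustness in the input. It only uses local mass growth around the moving center $x(t)$, i.e.\ a bound $\Phi(R)\lesssim R^{\theta}$. So it would still close if one had such a moving-center local mass estimate without any global $L_t^\infty L_x^q$ bound, whereas your direct route genuinely needs the global norm.
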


\begin{proof}
By \eqref{eq:aR-grad-bound} and the Cauchy--Schwarz inequality,
\begin{align}
  |\Vc_R(t)|
  &\le2\int |\nabla a_R(x-z)|\,|\nabla u(t,x)|\,|u(t,x)|\,dx\notag\\
  &\lesssim R\|\nabla u(t)\|_{L^2}\|u(t)\|_{L^2(B(z,2R))}.
  \label{eq:virial-action-CS}
\end{align}
The assumption $\sup_{t\ge0}\|\nabla u(t)\|_{L^2}<\infty$ gives \eqref{eq:virial-action-bound-general}.
If $R=R_T$ and $0\le t\le T$, then by \eqref{eq:ball-inclusion},
\begin{equation*}
  \|u(t)\|_{L^2(B(z,2R_T))}
  \le\|u(t)\|_{L^2(B(x(t),3R_T))}
  \le\Phi(3R_T).
\end{equation*}
By Lemma \ref{lem:local-mass-Lq},
\begin{equation*}
  \Phi(3R_T)\lesssim_{u,q} R_T^{\theta(q)}.
\end{equation*}
Substituting this into \eqref{eq:virial-action-CS} gives \eqref{eq:virial-action-bound-RT}.
\end{proof}

Proposition \ref{prop:virial-action-upper} immediately gives three common action bounds. If $R_T=D_x(T)+A_0$, then, for $0\le t\le T$,
\[
\begin{aligned}
  q=6:\quad& |\mathcal V_{R_T}(t)|\lesssim R_T^2,\\
  q=4:\quad& |\mathcal V_{R_T}(t)|\lesssim R_T^{7/4}
        \quad\text{if }u\in L_t^\infty L_x^4,\\
  q=2:\quad& |\mathcal V_{R_T}(t)|\lesssim R_T
        \quad\text{if }u\in L_t^\infty L_x^2.
\end{aligned}
\]
Thus the main proof below only compares the linear lower bound $\mathcal V_{R_T}(T)-\mathcal V_{R_T}(0)\gtrsim T$ with the estimate above to obtain the corresponding drift contradiction.

\section{Proofs of the main results}\label{sec:main-proofs}

This section closes the three estimates obtained above. Theorem \ref{thm:rigidity-general} is proved by contradiction, excluding nonzero critical elements, and Theorem \ref{thm:scattering-conditional} then turns this exclusion into conditional scattering.

\begin{proof}[Proof of Theorem \ref{thm:rigidity-general}]
Assume, for contradiction, that $U\not\equiv0$. To avoid repeated notation, write $u$ for $U$ in the proof. By below-threshold coercivity and Lemma \ref{lem:positive-K}, there exists $\kappa_0>0$ such that
\begin{equation*}
  K(u(t))\ge\kappa_0,
  \qquad t\ge0.
\end{equation*}
By Proposition \ref{prop:virial-derivative-lower}, there exists $A_0>0$ such that, for each $T\ge1$, if
\begin{equation*}
  R_T:=D_x(T)+A_0,
\end{equation*}
then the fixed-center virial action satisfies
\begin{equation*}
  \frac{d}{dt}\Vc_{R_T}(t)\ge4\kappa_0,
  \qquad 0\le t\le T.
\end{equation*}
Integrating gives
\begin{equation*}
  \Vc_{R_T}(T)-\Vc_{R_T}(0)\ge4\kappa_0T.
\end{equation*}
On the other hand, by Proposition \ref{prop:virial-action-upper},
\begin{equation*}
  |\Vc_{R_T}(T)|+|\Vc_{R_T}(0)|
  \lesssim_{u,q,A_0} R_T^{1+\theta(q)}.
\end{equation*}
Consequently,
\begin{equation}\label{eq:proof-T-less-RT}
  T\lesssim_{u,q,A_0} R_T^{1+\theta(q)}.
\end{equation}
By the hypothesis \eqref{eq:rigidity-drift-general}, there exists a sequence $T_n\to\infty$ such that
\begin{equation*}
  \frac{(D_x(T_n)+1)^{1+\theta(q)}}{T_n}\to0.
\end{equation*}
Since $A_0$ is fixed,
\begin{equation*}
  \frac{(D_x(T_n)+A_0)^{1+\theta(q)}}{T_n}\to0.
\end{equation*}
Taking $T=T_n$ in \eqref{eq:proof-T-less-RT}, we get
\begin{equation*}
  1\lesssim_{u,q,A_0}
  \frac{R_{T_n}^{1+\theta(q)}}{T_n}\to0,
\end{equation*}
a contradiction. Therefore $U\equiv0$.
\end{proof}

\begin{proof}[Proof of Theorem \ref{thm:scattering-conditional}]
Assume, for contradiction, that there exists $u_0\in\mathfrak A$ such that the corresponding forward global strong solution $u$ does not scatter. By the standard energy-critical local theory, this means
\begin{equation*}
  S_{[0,\infty)}(u)=\infty.
\end{equation*}
By the low-speed soliton-like critical-element reduction property assumed in the theorem, there exists a nonzero global strong solution $U$, and $U$ satisfies all assumptions of Theorem \ref{thm:rigidity-general}. Hence Theorem \ref{thm:rigidity-general} gives
\begin{equation*}
  U\equiv0,
\end{equation*}
which contradicts the nonzero critical element obtained from the reduction. Therefore, for every $u_0\in\mathfrak A$,
\begin{equation*}
  S_{[0,\infty)}(u)<\infty.
\end{equation*}
Finally, by Proposition \ref{prop:finite-S-scattering}, the finite norm $S_{[0,\infty)}(u)$ implies that there exists $u_+\in\dot H^1(\R^3)$ such that
\begin{equation*}
  \lim_{t\to\infty}\|u(t)-e^{it\Delta}u_+\|_{\dot H^1}=0.
\end{equation*}
This proves the theorem.
\end{proof}

\medskip
\noindent\textbf{Core estimates.} Fix a sufficiently small absolute constant $c>0$. The proof of Theorem \ref{thm:rigidity-general} uses only the following three classes of inequalities:
\begin{align}
  \text{coercivity:}\quad &K(u(t))\ge\kappa_0,
  \label{eq:ABC-A}\\
  \text{small error:}\quad &\mathcal E_{D_x(T)+A_0}[u(t);x(0)]\le c\kappa_0,
  \qquad 0\le t\le T,
  \label{eq:ABC-B}\\
  \text{action upper bound:}\quad &|\mathcal V_{D_x(T)+A_0}(t)|
  \lesssim (D_x(T)+A_0)^{1+\theta(q)}.
  \label{eq:ABC-C}
\end{align}
Here \eqref{eq:ABC-A} comes from the sharp Sobolev threshold, \eqref{eq:ABC-B} from critical compactness and drift geometry, and \eqref{eq:ABC-C} from local mass growth. This decomposition shows that the additional hypothesis $L_t^\infty L_x^q$ enters only in the action upper bound and does not alter the below-threshold coercivity or the tail-error control.

\medskip
\noindent\textbf{Unconditional direction.} The version without additional integrability proved here is the $q=6$ exclusion of subdiffusive drift. The standard $L^p$ breach in critical-element theory pushes the drift exponent to every $\beta<4/7$; if one also has the endpoint $L_t^\infty L_x^4$ control, then the same proof gives the endpoint $4/7$ and the sequential condition. If one could further prove that three-dimensional non-radial soliton-like critical elements have finite mass, then the same virial mechanism would immediately exclude all sublinear drift. Thus, from the viewpoint of the present method, the main obstruction to the full sublinear version is not the localized virial error, but the negative-regularity or finite-mass problem at the three-dimensional low-dimensional endpoint. Complete unconditional scattering also requires an additional proof that every nonscattering counterexample can be reduced to a low-speed soliton-like channel of the type excluded here.

\begin{acknowledgements}
The author would like to thank the supporting agencies for their continued support.
\end{acknowledgements}

\paragraph{Funding}
The author was supported by the Project ``Research on Nonlinear Partial Differential Equations'' (No. 2024KYCXTD018), the Special Projects in Key Areas of Guangdong Province (No. ZDZX1088), and the Fund of Guangzhou Municipal Science and Technology (No. 202102080428).

\paragraph{Data Availability Statement}
Data sharing is not applicable to this article, as no datasets were generated or analyzed during the current study.

\section*{Declarations}
\paragraph{Conflict of interest}
The author declares that he has no conflict of interest.

\paragraph{Publisher's Note}
Springer Nature remains neutral with regard to jurisdictional claims in published maps and institutional affiliations.

\begingroup
\sloppy
\hbadness=10000

\endgroup


\begin{thebibliography}{99}

\bibitem{Aubin1976}
Aubin, T.: Probl\`emes isop\'erim\'etriques et espaces de Sobolev. J. Differential Geometry 11, 573--598 (1976)

\bibitem{Talenti1976}
Talenti, G.: Best constant in Sobolev inequality. Ann. Mat. Pura Appl. 110, 353--372 (1976)

\bibitem{Weinstein1983}
Weinstein, M.I.: Nonlinear Schr\"odinger equations and sharp interpolation estimates. Commun. Math. Phys. 87, 567--576 (1983)

\bibitem{GinibreVelo1979I}
Ginibre, J., Velo, G.: On a class of nonlinear Schr\"odinger equations. I. The Cauchy problem, general case. J. Funct. Anal. 32, 1--32 (1979)

\bibitem{GinibreVelo1979II}
Ginibre, J., Velo, G.: On a class of nonlinear Schr\"odinger equations. II. Scattering theory, general case. J. Funct. Anal. 32, 33--71 (1979)

\bibitem{GinibreVelo1985}
Ginibre, J., Velo, G.: The global Cauchy problem for the nonlinear Schr\"odinger equation revisited. Ann. Inst. H. Poincar\'e Anal. Non Lin\'eaire 2(4), 309--327 (1985)

\bibitem{Kato1987}
Kato, T.: On nonlinear Schr\"odinger equations. Ann. Inst. H. Poincar\'e Phys. Th\'eor. 46, 113--129 (1987)

\bibitem{CazenaveWeissler1990}
Cazenave, T., Weissler, F.B.: The Cauchy problem for the critical nonlinear Schr\"odinger equation in $H^s$. Nonlinear Anal. 14, 807--836 (1990)

\bibitem{KeelTao1998}
Keel, M., Tao, T.: Endpoint Strichartz estimates. Am. J. Math. 120, 955--980 (1998)

\bibitem{Cazenave2003}
Cazenave, T.: Semilinear Schr\"odinger Equations. Courant Lecture Notes in Mathematics, vol. 10. American Mathematical Society, Providence (2003)

\bibitem{Glassey1977}
Glassey, R.T.: On the blowing up of solutions to the Cauchy problem for nonlinear Schr\"odinger equations. J. Math. Phys. 18(9), 1794--1797 (1977)

\bibitem{OgawaTsutsumi1991}
Ogawa, T., Tsutsumi, Y.: Blow-up of $H^1$ solution for the nonlinear Schr\"odinger equation. J. Differential Equations 92, 317--330 (1991)

\bibitem{Bourgain1999}
Bourgain, J.: Global well-posedness of defocusing critical nonlinear Schr\"odinger equation in the radial case. J. Am. Math. Soc. 12(1), 145--171 (1999)

\bibitem{Tao2005}
Tao, T.: Global well-posedness and scattering for the higher-dimensional energy-critical nonlinear Schr\"odinger equation for radial data. N. Y. J. Math. 11, 57--80 (2005)

\bibitem{TaoVisan2005}
Tao, T., Visan, M.: Stability of energy-critical nonlinear Schr\"odinger equations in high dimensions. Electron. J. Differential Equations 2005(118), 1--28 (2005)

\bibitem{Visan2007}
Visan, M.: The defocusing energy-critical nonlinear Schr\"odinger equation in higher dimensions. Duke Math. J. 138, 281--374 (2007)

\bibitem{CKSTT2008}
Colliander, J., Keel, M., Staffilani, G., Takaoka, H., Tao, T.: Global well-posedness and scattering for the energy-critical nonlinear Schr\"odinger equation in $\mathbb R^3$. Ann. of Math. 167, 767--865 (2008)

\bibitem{BahouriGerard1999}
Bahouri, H., G\'erard, P.: High frequency approximation of solutions to critical nonlinear wave equations. Am. J. Math. 121, 131--175 (1999)

\bibitem{Keraani2001}
Keraani, S.: On the defect of compactness for the Strichartz estimates of the Schr\"odinger equations. J. Differential Equations 175(2), 353--392 (2001)

\bibitem{KenigMerle2006}
Kenig, C.E., Merle, F.: Global well-posedness, scattering and blow-up for the energy-critical focusing non-linear Schr\"odinger equation in the radial case. Invent. Math. 166, 645--675 (2006)

\bibitem{DuyckaertsMerle2009}
Duyckaerts, T., Merle, F.: Dynamic of threshold solutions for energy-critical NLS. Geom. Funct. Anal. 18, 1787--1840 (2009)

\bibitem{HolmerRoudenko2008}
Holmer, J., Roudenko, S.: A sharp condition for scattering of the radial 3D cubic nonlinear Schr\"odinger equation. Commun. Math. Phys. 282, 435--467 (2008)

\bibitem{DuyckaertsHolmerRoudenko2008}
Duyckaerts, T., Holmer, J., Roudenko, S.: Scattering for the non-radial 3D cubic nonlinear Schr\"odinger equation. Math. Res. Lett. 15(6), 1233--1250 (2008)

\bibitem{KillipVisan2010}
Killip, R., Visan, M.: The focusing energy-critical nonlinear Schr\"odinger equation in dimensions five and higher. Am. J. Math. 132(2), 361--424 (2010)

\bibitem{KillipVisan2013}
Killip, R., Visan, M.: Nonlinear Schr\"odinger equations at critical regularity. In: Evolution Equations, Clay Mathematics Proceedings, vol. 17, pp. 325--437. American Mathematical Society, Providence (2013)

\bibitem{Dodson2019}
Dodson, B.: Global well-posedness and scattering for the focusing, cubic Schr\"odinger equation in dimension $d=4$. Ann. Sci. \`Ec. Norm. Sup\'er. 52(1), 139--180 (2019)

\end{thebibliography}
\end{document}